\documentclass[a4paper,UKenglish,cleveref, autoref, thm-restate]{lipics-v2021}

\hideLIPIcs  


\usepackage{xspace}
\usepackage{mathtools}
\usepackage{tikz}

\newtheorem{fact}[theorem]{Fact}

\newcommand{\Oof}{{\cal O}}
\DeclareMathOperator{\VCdim}{VCdim}

\renewcommand{\cal}{\mathcal}
\newcommand{\eps}{\varepsilon}

\newcommand{\FO}{\mathsf{FO}}

\newcommand{\R}{\mathbb{R}}

\renewcommand{\int}{\mathrm{int}}

\newcommand{\qr}{\mathrm{qr}}

\newcommand{\XX}{\mathcal{X}}
\newcommand{\YY}{\mathcal{Y}}
\renewcommand{\SS}{\mathcal{S}}

\newcommand{\NN}[0]{\mathrm{\mathbb{N}}}
\newcommand{\N}[0]{\mathrm{\mathbb{N}}}

\renewcommand{\subset}{\subseteq}

\newcommand{\col}{\textnormal{col}}

\newcommand{\dist}{\textnormal{dist}}

\renewcommand{\phi}{\varphi}
\newcommand{\diam}{\textnormal{diam}}

\newcommand{\floor}[1]{\lfloor #1 \rfloor}
\newcommand{\ceil}[1]{\lceil #1 \rceil}

\def\xInd#1#2{#1\setbox0=\hbox{$#1x$}\kern\wd0\hbox to 0pt{\hss$#1\mid$\hss}
	\lower.9\ht0\hbox to 0pt{\hss$#1\smile$\hss}\kern\wd0}


\def\xnotind#1#2{#1\setbox0=\hbox{$#1x$}\kern\wd0
	\hbox to 0pt{\mathchardef\nn=12854\hss$#1\nn$\kern1.4\wd0\hss}
	\hbox to 0pt{\hss$#1\mid$\hss}\lower.9\ht0 \hbox to 0pt{\hss$#1\smile$\hss}\kern\wd0}


\newcommand{\weight}{\mathbf{w}}

\newcommand{\setof}[2]{\{#1\mid#2\}}

\newcommand{\CC}{\mathcal{C}}
\newcommand{\Cc}{\CC}
\newcommand{\C}{\CC}

\newcommand{\FF}{\mathcal{F}}
\newcommand{\PP}{\mathcal{P}}

\renewcommand{\le}{\leqslant}
\renewcommand{\leq}{\le}
\renewcommand{\ge}{\geqslant}
\renewcommand{\geq}{\ge}

\newcommand{\Ball}{\mathrm{Ball}}

\newcommand{\Oh}{\mathcal{O}}

\bibliographystyle{plainurl}

\title{Separability Properties of Monadically Dependent Graph Classes} 


\author{\'{E}douard Bonnet}{CNRS, ENS de Lyon, Université Claude Bernard Lyon 1, LIP UMR 5668, Lyon, France}{edouard.bonnet@ens-lyon.fr}{https://orcid.org/0000-0002-1653-5822}{}

\author{Samuel Braunfeld}{Computer Science Institute, Charles University, Prague, Czech Republic; and
the Czech Academy of Sciences, Institute of Computer Science, Pod Vod\'{a}renskou v\v{e}\v{z}\'{\i} 2, 182 00 Prague, Czech Republic}{sbraunfeld@iuuk.mff.cuni.cz}{https://orcid.org/0000-0003-3531-9970}{}

\author{Ioannis Eleftheriadis}{Department of Computer Science and Technology, University of Cambridge, UK}{ie257@cam.ac.uk}{https://orcid.org/0000-0003-4764-8894}{}

\author{Colin Geniet}{Discrete Mathematics Group, Institute for Basic Science (IBS), Daejeon, South Korea}{colin@ibs.re.kr}{https://orcid.org/0000-0003-4034-7634}{}

\author{Nikolas Mählmann}{University of Bremen, Germany}{maehlmann@uni-bremen.de}{https://orcid.org/0000-0003-3657-7736}{}

\author{Micha{\l} Pilipczuk}{University of Warsaw, Poland}{michal.pilipczuk@mimuw.edu.pl}{https://orcid.org/0000-0001-7891-1988}{}

\author{Wojciech Przybyszewski}{University of Warsaw, Poland}{przybyszewski@mimuw.edu.pl}{https://orcid.org/0000-0003-1158-9925}{}

\author{Szymon Toru{\'n}czyk}{University of Warsaw, Poland}{szymtor@mimuw.edu.pl}{https://orcid.org/0000-0002-1130-9033}{}

\authorrunning{Bonnet et al.} 

\Copyright{\'{E}douard Bonnet, Samuel Braunfeld, Ioannis Eleftheriadis, Colin Geniet, Nikolas Mählmann, Micha{\l} Pilipczuk, Wojciech Przybyszewski, and Szymon Toru{\'n}czyk} 

\begin{CCSXML}
    <ccs2012>
    <concept>
    <concept_id>10002950.10003624.10003633</concept_id>
    <concept_desc>Mathematics of computing~Graph theory</concept_desc>
    <concept_significance>500</concept_significance>
    </concept>
    <concept>
    <concept_id>10003752.10003790.10003799</concept_id>
    <concept_desc>Theory of computation~Finite Model Theory</concept_desc>
    <concept_significance>500</concept_significance>
    </concept>
    </ccs2012>
\end{CCSXML}
    
\ccsdesc[500]{Mathematics of computing~Graph theory}
\ccsdesc[500]{Theory of computation~Finite Model Theory}


\keywords{Structural graph theory, Monadic dependence} 

\category{Track B: Automata, Logic, Semantics, and Theory of Programming}

\relatedversion{} 


\funding{
    \'{E}B and CG were supported by the ANR projects TWIN-WIDTH (ANR-21-CE48-0014-01) and DIGRAPHS (ANR-19-CE48-0013-01).
    SB received funding from the European Research Council (ERC) under the European Union’s Horizon 2020 research and innovation programme (grant agreement No.\ 810115 - Dynasnet),
    and by Project 24-12591M of the Czech Science Foundation (GAČR), and supported partly by the long-term strategic development financing of the Institute of Computer Science (RVO: 67985807).
    IE was supported by a George and Marie Vergottis Scholarship awarded through Cambridge Trust, an Onassis Foundation Scholarship, and a Robert Sansom studentship.
    CG was further supported by the Institute for Basic Science (IBS-R029-C1).
    NM was supported by the German Research Foundation (DFG) with grant agreement No.\ 444419611.
    MP and WP were supported by the project BOBR that has received funding from ERC under the European Union’s Horizon 2020 research and innovation programme, grant agreement No.\ 948057.
    SzT received funding from ERC grant  BUKA (No.\ 101126229).
}


\nolinenumbers 

\EventEditors{Keren Censor-Hillel, Fabrizio Grandoni, Joel Ouaknine, and Gabriele Puppis}
\EventNoEds{4}
\EventLongTitle{52nd International Colloquium on Automata, Languages, and Programming (ICALP 2025)}
\EventShortTitle{ICALP 2025}
\EventAcronym{ICALP}
\EventYear{2025}
\EventDate{July 8--11, 2025}
\EventLocation{Aarhus, Denmark}
\EventLogo{}
\SeriesVolume{334}
\ArticleNo{166}

\begin{document}

\maketitle

\begin{abstract}
 A graph class $\CC$ is monadically dependent if one cannot interpret all graphs in colored graphs from $\CC$ using a fixed first-order interpretation. We prove that monadically dependent classes can be exactly characterized by the following property, which we call {\em{flip-separability}}: for every $r\in \N$, $\eps>0$, and every graph $G\in \Cc$ equipped with a weight function on vertices, one can apply a bounded (in terms of $\Cc,r,\eps$) number of {\em{flips}} (complementations of the adjacency relation on a subset of vertices) to $G$ so that in the resulting graph, every radius-$r$ ball contains at most an $\eps$-fraction of the total weight. On the way to this result, we introduce a robust toolbox for working with various notions of local separations in monadically dependent classes.
\end{abstract}


\section{Introduction}\label{sec:intro}

In this work we study separability properties in well-structured dense graphs. To put our work in context, let us first review the setting of sparse graphs.
An archetypal statement concerning separability is the following: Every $n$-vertex tree has a {\em{centroid}} --- a vertex whose removal breaks the tree into subtrees with at most $n/2$ vertices each. This statement can be generalized to graphs of bounded treewidth: If an $n$-vertex graph $G$ has treewidth $k$, then there is a set $S$ of at most $k+1$ vertices so that every connected component of $G-S$ contains at most $n/2$ vertices. A set $S$ with this property is called a {\em{balanced vertex separator}} of $G$.

One way to generalize this statement to graphs that are not necessarily tree-like is to allow separators of larger cardinality. For instance, a classic result of Lipton and Tarjan~\cite{LiptonT80} states that every $n$-vertex planar graph has a balanced vertex separator of size $\Oh(\sqrt{n})$. Such {\em{sublinear separators}} are a fundamental tool in the algorithmic theory of graphs with topological structure. In this work, we are interested in a different kind of separators, where we still require the separator to be of bounded size, but we relax the separation requirement to have a local, rather than a global character. The statement below represents the kind of results we are interested in.

\begin{theorem}[{\cite[Thm. 42]{structural-sparsity}}]\label{thm:nd-residuality}
    For every nowhere dense graph class $\CC$, $r\in\N$, and $\eps>0$ there is some $k\in\N$ with the following property.
    For every  $n$-vertex graph $G\in \CC$
  there is a set $S$ consisting of at most $k$ vertices of $G$
  such that every ball of radius $r$ in $G-S$ contains at most $\eps\cdot n$ vertices.
\end{theorem}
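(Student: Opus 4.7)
The standard tool for local-separation results in nowhere dense classes is \emph{uniform quasi-wideness} (UQW), which in fact characterizes them: for every nowhere dense class $\CC$ and $r', m \in \N$ there exist $N, s \in \N$ such that in every $G \in \CC$, any set $A \subseteq V(G)$ with $|A| \ge N$ contains a subset $I$ of size at least $m$ whose pairwise distances in $G - B$ exceed $r'$, for some $B \subseteq V(G) \setminus I$ with $|B| \le s$. My plan is to reduce the theorem to bounding the set
\[
W \;:=\; \{v \in V(G) : |B_r^G(v)| > \eps n\}
\]
of \emph{heavy} vertices. Since deleting vertices cannot enlarge $r$-balls, taking $S := W$ guarantees $|B_r^{G-S}(v)| \le |B_r^G(v)| \le \eps n$ for every $v \in V(G) \setminus S$, so it suffices to bound $|W|$ by some $k = k(\CC, r, \eps)$.

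Suppose for contradiction that $|W| \ge N$, where $N$ comes from UQW applied with $r' := 2r$ and $m := \lceil 2/\eps \rceil + 1$. UQW yields $B$ with $|B| \le s$ and $I \subseteq W$ with $|I| \ge m$ whose pairwise $G-B$-distances exceed $2r$; in particular, the balls $\{B_r^{G-B}(v) : v \in I\}$ are pairwise disjoint. Any $u \in B_r^G(v) \setminus B_r^{G-B}(v)$ is reachable from $v$ in $G$ by a path of length $\le r$ passing through $B$, hence $u \in N_r^G[B]$, which gives the inclusion
\[
B_r^G(v) \;\subseteq\; B_r^{G-B}(v) \,\cup\, N_r^G[B].
\]
Thus each $v \in I$ satisfies $|B_r^{G-B}(v)| > \eps n - |N_r^G[B]|$, and if one can arrange $|N_r^G[B]| < \eps n /2$, summing over $I$ yields disjoint volume exceeding $m \cdot \eps n/2 > n$, contradicting $|V(G)| = n$.

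The main obstacle is controlling $|N_r^G[B]|$: UQW bounds $|B|$ but not the sizes of $r$-balls around its vertices, and if $B$ happened to contain vertices with very large balls, $N_r^G[B]$ could be almost all of $G$. I would resolve this by a two-scale argument: prove the bound on $|W|$ by induction on the threshold parameter, invoking the inductive statement at the finer scale $\eps/(2s)$ to bound $|W_0| := |\{v : |B_r^G(v)| > \eps n/(2s)\}|$ by some $k_0 = k_0(\CC, r, \eps)$. One then augments the separator by $W_0$ and applies UQW inside $G - W_0$ (which still lies in $\CC$, as nowhere dense classes are closed under induced subgraphs) so that $B$ avoids $W_0$; every $b \in B$ then has $|B_r^G(b)| \le \eps n/(2s)$, whence $|N_r^G[B]| \le s \cdot \eps n/(2s) = \eps n/2$ as required. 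The delicate point is that passing to $G - W_0$ can inflate distances, so one must apply UQW with a slightly larger radius parameter and verify that $G-B$-distances in the reduced graph still translate to disjoint radius-$r$ balls in the original $G-B$; once this bookkeeping is handled, all constants remain functions of $\CC, r, \eps$ alone.
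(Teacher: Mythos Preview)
Your reduction to bounding $|W|$ is where the argument breaks: the quantity $|W| = |\{v : |B_r^G(v)| > \eps n\}|$ is simply \emph{not} bounded by any function of $\CC, r, \eps$. Take the class of stars $K_{1,n-1}$ (trees, hence nowhere dense) with $r \ge 2$ and any $\eps < 1$: every vertex lies within distance $2$ of every other, so $|B_r^G(v)| = n$ for all $v$ and $W = V(G)$. The theorem still holds here --- delete the centre --- but the correct separator is not $W$. Your inductive patch cannot rescue this: the step $\eps \mapsto \eps/(2s)$ moves to a \emph{harder} instance (smaller threshold, more heavy vertices) with no base case, and in the star example $|W_0| = n$ is just as unbounded as $|W|$. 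Both the target statement and the proposed induction collapse on this single example.

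The paper does not itself prove \cref{thm:nd-residuality} (it is quoted from~\cite{structural-sparsity}), but its proof of the dense analogue indicates the right shape: run the construction in the opposite direction. Call $S$ \emph{sparsifying} if every $r$-ball in $G - S$ has at most $\eps n$ vertices; start with the trivially sparsifying $S = V(G)$ and iteratively shrink it. If $|S|$ is large, apply UQW to $S$ (not to $W$) with radius $\sim 8r$ to obtain $I \subseteq S$ of size $m > s + 1/\eps$ that is scattered in $G - B$ for some $|B| \le s$. The balls $B_{4r}^{G-B}(u)$ for $u \in I$ are pairwise disjoint, so at most $\lfloor 1/\eps \rfloor$ of them exceed $\eps n$; let $X \subseteq I$ index those. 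One then checks that $S' := (S \setminus (I \setminus X)) \cup B$ is still sparsifying: a vertex $v \notin S'$ within distance $3r$ of some $u \in I \setminus X$ in $G-B$ has its $r$-ball in $G - S'$ contained in the small set $B_{4r}^{G-B}(u)$, while a vertex far from $I \setminus X$ in $G-B$ sees no new short paths through the reinstated vertices and hence satisfies $B_r^{G-S'}(v) \subseteq B_r^{G-S}(v)$. Since $|S'| \le |S| - m + \lfloor 1/\eps \rfloor + s < |S|$, iterating terminates with $|S|$ bounded by a constant depending only on $\CC, r, \eps$.
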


\emph{Nowhere denseness} is a very general notion of uniform, local sparseness~\cite{NesetrilM11a}.
Nowhere dense classes include for example the class of planar graphs, all classes of bounded treewidth, and all classes excluding a minor; see~\cite{sparsity-book} for an introduction to the topic. Intuitively, the separator $S$ provided by \cref{thm:nd-residuality} shatters the graph's metric into small local ``vicinities'', but does not need to break the graph into components in any global sense. This way, even if a graph does not admit any sublinear-size balanced separators in the global sense, it may have a very small separator in the local sense; consider for instance subcubic expanders.

What would be an analogue of \cref{thm:nd-residuality} in dense graphs? A recent line of work has identified the model-theoretic notion of {\em{monadic dependence}} as a promising candidate for the dense counterpart of nowhere denseness. Without going into technical details, a graph class $\Cc$ is {\em{monadically dependent}} if there is no fixed one-dimensional first-order interpretation that allows interpreting all graphs in vertex-colored graphs from $\Cc$; see \cref{lem:vc-trans-mondep} for a full definition.
Apart from all nowhere dense classes, monadically dependent classes also include all monadically stable classes and all classes of bounded clique- or twin-width.
It turns out that multiple characterizations of nowhere dense classes can be lifted to analogous characterizations of monadically dependent classes (and related concepts), giving suitable dense counterparts; see e.g.~\cite{flip-flatness,flip-breakability,flipper-game,flip-width} and an overview in a recent survey~\cite{Pilipczuk25} and in the thesis~\cite{maehlmann-thesis}. In this analogy, the concept of vertex deletion is replaced by the concept of applying a {\em{flip operation}}: replacing all edges with non-edges and vice versa within some subset of vertices. Note that a single flip operation can destroy multiple edges --- for instance turn a complete graph into an edgeless graph --- hence this concept is well-suited to serve as a notion of separation in the setting of dense graphs.

And indeed, it is not hard to prove that if an $n$-vertex graph $G$ has cliquewidth at most~$k$ (cliquewidth is the dense counterpart of treewidth), then $\Oh(k)$ flip operations can be applied to $G$ so that every component of the obtained graph has at most $n/2$ vertices. This is the dense counterpart of the aforementioned separability property of graphs of bounded treewidth. The main result of this work is the dense counterpart of \cref{thm:nd-residuality}. We phrase it and prove it in the more general setting of vertex-weighted graphs. Here, a {\em{weighted graph}} is a graph $G$ equipped with a weight function $\weight\colon V(G)\to \R_{\geq 0}$; for $X\subseteq V(G)$, we denote $\weight(X)\coloneqq \sum_{u\in X} \weight(u)$. 
Additionally, $\Ball^r_G(v)$ denotes the set of vertices at distance at most~$r$ from the vertex $v$ in the graph $G$.

\begin{definition}\label{def:separable}
  A graph class $\CC$
  is \emph{flip-separable}
  if for every $r\in\N$ and $\eps>0$, there exists $k\in\N$ with the following property.
    For every graph $G\in \CC$ and weight function $\weight\colon V(G) \to \R_{\geq 0}$, there is a~graph $G'$ that can be obtained by applying at most $k$ flip operations to $G$ so that
    \[\weight(\Ball^r_{G'}(v))\le \eps\cdot \weight(V(G))\qquad\text{for every $v\in V(G)$ with weight at~most~$\eps\cdot \weight(V(G))$}.\]
\end{definition}

\begin{theorem}\label{thm:equivalence}
  A graph class $\Cc$ is monadically dependent if and only if it is flip-separable.
\end{theorem}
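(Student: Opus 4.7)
The equivalence splits into two implications, which I would treat separately. The implication from monadic dependence to flip-separability is the bulk of the work and I would attack it via iterative use of known local-separation machinery, while the reverse direction reduces to finding a hard instance for an arbitrary FO interpretation.

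For \emph{monadic dependence implies flip-separability}, my plan is to iteratively apply the flip-breakability characterization of monadically dependent classes from \cite{flip-breakability}: in every $G \in \Cc$ and sufficiently large vertex set $W$, one can find a small hitting set $S$, a bounded number of flips, and two subsets of $W$ of prescribed size that end up at distance $> 2r$ in $G - S$. Starting from a weighted graph $(G,\weight)$, I would identify the heaviest $r$-ball $B$ in $G$, apply flip-breakability to $W = B$ at radius $2r$, and split the weight of $B$ into two parts of comparable size that are at distance $> 2r$ after a bounded number of flips. Iterating this halves the maximum $r$-ball weight at each step; since monadic dependence is preserved under a bounded number of flips, flip-breakability can be reapplied with the same constants at every iteration. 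After $O(\log(1/\eps))$ rounds, every vertex of weight at most $\eps\cdot\weight(V(G))$ has an $r$-ball of the required weight, with the total number of flips bounded by a function of $r$, $\eps$, and $\Cc$ only.

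For \emph{flip-separability implies monadic dependence}, I would argue contrapositively. If $\Cc$ is not monadically dependent, then by \cref{lem:vc-trans-mondep} there is a one-dimensional FO interpretation $\mathsf{I}$ pulling every finite graph back to a colored graph from $\Cc$. Applied to a family $H_n$ of graphs that provably resist flip-separation---e.g., bounded-degree expanders with the uniform weight function---this yields $G_n \in \Cc$ (after forgetting the coloring) for which $r$-balls in $H_n$ correspond to $r'$-balls in $G_n$ with $r'$ depending only on $r$ and $\mathsf{I}$. Crucially, any $k$ flips on $G_n$ induce only a bounded perturbation of the interpretation and therefore transform $H_n$ by at most a bounded number of flips as well. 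Since bounded-degree expanders admit no flip-separator of bounded size at any fixed radius, this contradicts flip-separability of $\Cc$.

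The main obstacle is controlling the blow-up of constants in the recursion for the hard direction: a naive iteration multiplies the flip budget by quantities potentially depending on the current graph size or on flips introduced in previous rounds. The promised \emph{robust toolbox for working with various notions of local separations} is presumably engineered to address this point; I would invest the main technical effort in establishing a uniform version of flip-breakability that applies to every graph in the ``monadic closure'' of $\Cc$ with constants depending only on $\Cc$ and $r$, and in lifting its cardinality-based conclusion to the weighted setting of \cref{def:separable}, which I would handle by a vertex-duplication reduction simulating weights by multiplicities.
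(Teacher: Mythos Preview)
Both directions of your proposal have genuine gaps.

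\textbf{Forward direction.} Your description of flip-breakability is garbled: there is no hitting set $S$ or graph $G-S$ in the statement---that is the nowhere-dense version (\cref{thm:nd-residuality}). Flip-breakability gives a $t$-flip $H$ of $G$ and large $A_1,A_2\subseteq W$ with $\dist_H(A_1,A_2)>2r$. More seriously, the iteration does not converge. Breaking the heaviest ball $B$ by a $t$-flip does not ``halve the maximum $r$-ball weight'': the flip changes the metric \emph{globally}, so afterwards the heaviest $r$-ball may be centered elsewhere and have nothing to do with $B$, $A_1$, or $A_2$. Flip-breakability also returns $A_1,A_2$ large in \emph{cardinality}, not in weight, so even inside $B$ you have no control over how the weight splits. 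The paper's argument is structurally different: it maintains a (possibly huge) \emph{sparsifying family} $\FF$ of $t$-sets such that $\weight(\Ball^r_\FF(v))\le\eps\cdot\weight(V(G))$ for all $v$, where $\dist_\FF=\max_{S\in\FF}\dist_S$. This metric is aggregable, so enlarging $\FF$ never hurts; the work is in \emph{shrinking} $\FF$. One applies flip-breakability not to a heavy ball but to the family $\FF$ itself (\cref{lem:small-balls}), obtaining a large $\FF'\subseteq\FF$ jointly sparsified by a bounded $\YY$, and then uses Gaifman locality relative to $\dist_\YY$ (\cref{lem:partition-gaifman}) to argue that a bounded subfamily of $\FF'$ already reproduces its sparsifying effect on vertices far from $\FF'$. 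This lets one discard most of $\FF'$ in exchange for $\YY$, strictly shrinking $\FF$. The metric-conversion lemma (\cref{lem:new-transfer}, \cref{thm:conversion}) is what finally turns the bounded family $\FF$ back into a single $k$-flip.

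\textbf{Backward direction.} Your witness family fails outright: in a $d$-regular graph every $r$-ball has at most $1+d+\cdots+d^r$ vertices, so for fixed $r,\eps$ and $n$ large, bounded-degree expanders satisfy \cref{def:separable} with \emph{zero} flips. Even with a better witness, the claim that $k$ flips on $G_n$ induce a bounded number of flips on the interpreted $H_n$ is unjustified and in general false: a flip on $G_n$ perturbs the truth of $\varphi(x,y)$ in a way that need not be a flip on $H_n$. The paper instead proves directly that flip-separability implies flip-breakability (\cref{lem:sep-implies-break}): given $W$, put unit weight on $W$ and zero elsewhere, apply flip-separability with $\eps=\tfrac12$ at radius $4r$, and extract $A_1,A_2$ by a short case analysis on whether some $2r$-ball meets $W$ in at least $\sqrt{|W|}$ points.
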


Together with the characterization of monadic dependence through {\em{flip-breakability}} (see \cref{thm:flip-breakability}), proposed by Dreier, M\"ahlmann, and Toru\'nczyk~\cite{flip-breakability}, \cref{thm:equivalence} corroborates the intuition that graphs from monadically dependent classes can be sparsified on a local level using few flips. In fact, we use flip-breakability in our proof, and flip-breakability can be easily derived from flip-separability (\cref{lem:sep-implies-break}); so \cref{thm:equivalence} can be seen as a strengthening of the result of~\cite{flip-breakability}. In terms of applications, we believe that \cref{thm:equivalence} will have direct consequences for the existence of {\em{modelling $\FO$-limits}} for $\FO$-convergent sequences of graphs from monadically dependent classes, similarly as is the case for \cref{thm:nd-residuality} in the context of nowhere dense classes~\cite{modellinglimits}. We defer working out this application to future work.

On our way to the proof of \cref{thm:equivalence}, we develop a versatile toolbox of {\em{flip-metrics}}: working with the local metric structure of a graph under various notions of flips.
This toolbox is an equally important contribution of this work, and we now discuss it in more detail.

\subsection*{Flip metrics and metric conversion}
There are several, closely related variants of flips, and of the \emph{metrics} they define, each of which has its advantage over the others. The emerging toolbox  allows converting one variant into another, taking advantage of the benefits of each variant.

\subparagraph*{Partition flips.} So far, we have only discussed the ``operational'' concept of flips, which boils down to applying a number of flip operations --- complementing the adjacency relation within some set of vertices. It is equivalent, but more useful, to see this as a single operation consisting of taking a partition of the vertex set and complementing the adjacency relation within a selection of pairs of parts. Concretely, if $G$ is a graph and $\cal P$ is a partition of vertices of $G$, then a {\em{$\cal P$-flip}} of $G$ is any graph $G'$ that can be obtained from $G$ as follows: for every pair of parts $A,B\in \cal P$ (possibly $A=B$), either complement the adjacency relation within $A\times B$ or leave it intact. Further, call $G'$ a {\em{$k$-flip}} of $G$ if $G'$ is a $\cal P$-flip of $G$ for some partition $\cal P$ with $|\cal P|\leq k$. It can be easily seen that if $G'$ is a $k$-flip of $G$, then $G'$ can be obtained from $G$ by applying $\Oh(k^2)$ single flip operations; and if $G'$ can be obtained from $G$ by applying $\ell$ single flip operations, then $G'$ is a $2^\ell$-flip of $G$. Hence, from now on we will use the definition of a $k$-flip of a graph as our basic notion of flips. In particular, in \cref{def:separable} we can equivalently postulate that $G'$ is a $k$-flip of $G$.

\subparagraph*{Definable flips.} The caveat of $k$-flips is that the definition considers an arbitrary partition $\cal P$ of the vertex set. In many arguments, particularly those concerning first-order logic, it is useful to restrict attention to some well-behaved partitions, for instance definable from a handful of vertices. For this purpose, one considers {\em{definable flips}}, first used by Bonnet et al.~\cite{boundedLocalCliquewidth}, and then more explicitly by Gajarsk\'y et al.~\cite{flipper-game}. 
Concretely, if $S$ is a set of vertices of a graph~$G$, then we consider the partition $\cal P_S$ in which every $s\in S$ is in its own singleton part, while the vertices $v\in V(G)\setminus S$ are partitioned according to their neighborhood in $S$, i.e.~$u,v$ are in the same part if $N_G(u) \cap S = N_G(v) \cap S$, where $N_G(u)$ denotes the (open) neighborhood of $u$ in $G$. Then, $\PP_S$-flips are called \emph{$S$-definable flips}.

While definable flips are in general less powerful than classic flips, it turns out that in classes of bounded VC-dimension, every classic flip can be in some sense emulated by a definable flip. This observation is formalized in the lemma below, first proved by Bonnet et al.~\cite{boundedLocalCliquewidth} (we use the formulation of Toru\'nczyk~\cite{flip-width}). Recall that the \emph{VC-dimension} of a graph $G$ is the maximum cardinality of a set $A\subset V(G)$ such that $\setof{N_G(v)\cap A}{v\in V(G)}=2^A$; monadically dependent classes have bounded~VC-dimension.

\begin{restatable}[\cite{boundedLocalCliquewidth,flip-width}]{lemma}{inclem}\label{lem:inc-lemma}
  Let $G$ be a graph of VC-dimension at most $d$ and let $G'$ be a $k$-flip of $G$.
  Then there is a~set $S\subset V(G)$ with $|S|\le \Oh(dk^2)$ and an $S$-definable flip $G''$ of $G$
  so that for all $r\in\N$, we have
  $$\Ball^r_{G''}(v)\subset \Ball^{5r}_{G'}(v)\qquad\text{for all $v\in V(G)$.}$$
\end{restatable}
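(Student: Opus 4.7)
The intuition is that bounded VC-dimension lets a small set $S$ of ``witness vertices'' approximately encode the partition $\cal P$ defining the $k$-flip $G'$; the $S$-definable flip then emulates $G'$ with only a constant-factor stretch in distances. I would proceed in three phases: construct $S$ by VC-dimensional sampling of each pair of parts, read off the definable flip bits from the dominant-part structure of $S$-types, and finally verify the ball inclusion by showing that every edge of $G''$ is within $G'$-distance $5$.

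Write $G'$ as a $\cal P$-flip with $\cal P = \{P_1, \ldots, P_k\}$ and symmetric flip pattern $b \colon \binom{[k]}{\leq 2} \to \{0,1\}$. For each ordered pair $(i,j) \in [k]^2$ I extract a sample $S_{ij} \subset P_j$ of size $\Oh(d)$ by invoking standard VC-dimension tools (Sauer--Shelah, $\eps$-approximations, or direct iterative distinguishing) on the set system $\{N_G(v) \cap P_j : v \in V(G)\}$, which inherits VC-dimension at most $d$ from $G$. Each $S_{ij}$ is chosen so that the atomic type $N_G(v) \cap S_{ij}$ certifies enough of how $v$ sees $P_j$ in $G$. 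Setting $S \coloneqq \bigcup_{(i,j) \in [k]^2} S_{ij}$ then gives $|S| \leq \Oh(dk^2)$.

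For the $S$-definable flip $G''$, to each pair of $S$-atomic types $(A, B)$ I assign the flip bit $b(i^\ast(A), j^\ast(B))$, where $i^\ast(A)$ denotes the ``dominant'' $\cal P$-part containing a majority of the vertices of $S$-type $A$. This choice guarantees that $G''$ agrees with $G'$ on every pair of vertices whose actual $\cal P$-parts match their dominant parts; the potentially troublesome edges of $G''$ are precisely those on which one of the parts disagrees. The metric claim $\Ball^r_{G''}(v) \subset \Ball^{5r}_{G'}(v)$ reduces, by a straightforward induction on $r$, to showing that each edge $uv \in G''$ has $G'$-distance at most $5$. If the flip bit chosen for $(A_u, A_v)$ matches $b(P(u), P(v))$ then $uv \in G'$; otherwise, without loss of generality $i^\ast(A_u) \neq P(u)$, so there is a ``translator'' $u' \in P_{i^\ast(A_u)}$ with $N_G(u') \cap S = N_G(u) \cap S$, and using witnesses from the relevant $S_{ij}$'s one chains a $G'$-path $u \to \cdots \to v$ of length at most $5$ that routes through such translators and $S$-vertices, exploiting that $u$ and $u'$ are $G$-indistinguishable on~$S$.

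\textbf{Main obstacle.} The crux is coordinating the VC-dimensional sampling with the explicit path construction: the witness sets $S_{ij}$ must be granular enough to guarantee the existence of the right translators and short $G'$-paths through $S$, while still keeping the total size at $\Oh(dk^2)$. The stretch factor $5$ is an artifact of the path construction, reflecting the number of hops needed to travel from $u$ to its translator, across a witness in $S$, to the dominant-part representative of $v$, and on to $v$ itself; tightening the construction could potentially shrink this constant but the overall scheme must still resolve every flip-disagreement via a bounded-length detour in~$G'$.
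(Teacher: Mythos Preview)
The paper does not give its own proof of this lemma: it is imported verbatim from \cite{boundedLocalCliquewidth,flip-width} and used as a black box, so there is no in-paper argument to compare against.

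Your plan is in line with the original proofs in those references. The scheme there is exactly what you outline: for each ordered pair of parts one picks $\Oh(d)$ witnesses in the target part (using bounded VC-dimension to control the number of atomic types), collects them into $S$, and defines the flip on $\cal P_S$ by majority/dominant part. The ball inclusion is then reduced to bounding $\dist_{G'}(u,v)$ for each edge $uv$ of $G''$, and the bound $5$ arises from a short detour through a translator and a witness vertex, just as you describe. So your proposal is on the right track and would reconstruct the cited argument; the only substantive work left is the careful choice of the samples $S_{ij}$ so that the translator-and-witness path always exists, which is precisely what you flag as the main obstacle.
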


The right way of seeing the conclusion of \cref{lem:inc-lemma} is that $G''$ sparsifies $G$ at least as well as $G'$, because bounded-radius balls in $G''$ are contained in bounded-radius balls in $G'$.
\Cref{lem:inc-lemma} turned out to be an indispensable tool in the study of flips, see its applications in \cite{boundedLocalCliquewidth,definable-decompositions,flip-width}.

\subparagraph*{Flip-metrics.} A fundamental property of vertex separators is that they are easily aggregable. For instance, let $G$ be a graph with some vertices colored red and some colored blue, and let $S_1,S_2$ be vertex sets such that every component of $G-S_1$ contains at most $p$ red vertices, and every component of $G-S_2$ contains at most $p$ blue vertices. Then, the union $S_1\cup S_2$ is a separator achieving both properties: every component of $G-(S_1\cup S_2)$ has at most $p$ red vertices {\em{and}} at most $p$ blue vertices. However, it is not obvious how to achieve this kind of aggregability in the context of flips: if $G_1,G_2$ are $k$-flips of $G$ so that  every component of $G_1$ has at most $p$ red vertices and every component of $G_2$ has at most $p$ blue vertices, how do we construct a single flip $G'$ that satisfies both these properties?

A way of approaching this issue was proposed by Gajarsk\'y et al.~\cite{flipper-game} in the form of {\em{flip-metrics}}. In a nutshell, the idea is that given a set of vertices $S$, we consider all the $S$-definable flips at the same time. Concretely,  for two vertices $u,v$, we define
\[\dist_S(u,v)\coloneqq \max\{\dist_{G'}(u,v)\colon G'\textrm{ is an $S$-definable flip of $G$}\}.\]
As observed in~\cite{flipper-game}, this is a metric on the vertex set of $G$. Note that balls in this metric satisfy the following:
\[\Ball_S^r(v)
\coloneqq \{ u \in V(G) : \dist_S(u,v) \leq r \}
=\bigcap \left\{\Ball_{G'}^r(v)\colon G'\textrm{ is an $S$-definable flip of $G$}\right\}.\]
Crucially, the flip-metrics are aggregable in the following sense: for two vertex sets $S,T$ we always have $\dist_{S\cup T}(u,v)\geq \max(\dist_S(u,v),\dist_T(u,v))$, hence every $r$-ball in the metric $\dist_{S\cup T}$ is contained in the intersection of $r$-balls in the metrics $\dist_S$ and $\dist_T$.

The caveat of flip-metrics is that they no longer originate from a single graph on which one could work. Our main contribution to the theory of flips is the following lemma, which shows that in classes of bounded VC dimension, any flip-metric can be emulated by the usual distance metric in a single definable~flip.

\begin{restatable}{lemma}{conversion}\label{lem:new-transfer}
  Let $G$ be a graph of VC-dimension at most $d$ and let $T$ be a set of vertices of $G$ of size at most $k$. Then there is a set of vertices $S$ with $|S|\leq k^{\Oh(d^2)}$ and an $S$-definable flip $G'$ of $G$ such that
    \[\Ball^r_{G'}(v)\subset \Ball^{30r}_T(v)\qquad\text{for all $v\in V(G)$ and $r \in \N$.}\]
\end{restatable}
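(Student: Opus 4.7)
The goal is to convert the envelope metric $\dist_T$ (a maximum over exponentially many $T$-definable flips) into the graph distance of a single $S$-definable flip $G'$, paying at most a factor of $30$ in the radius. A first sanity check: by the Sauer--Shelah lemma applied to the VC-dimension bound, the partition $\mathcal{P}_T$ has at most $K := k^{\Oh(d)}$ classes, so every $T$-definable flip is in particular a $K$-flip. Consequently, for any \emph{single} $T$-definable flip $H$, \cref{lem:inc-lemma} already produces an emulating definable flip with parameter set of size $\Oh(dK^2) = k^{\Oh(d)}$. The real challenge is to find a single $G'$ that simultaneously dominates \emph{every} $T$-definable flip.

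My approach would be iterative. Start with $S_0 := T$ and let $G_0$ be some canonical $T$-definable flip. At step $i$, suppose there exist vertices $u,v$, a radius $r$, and a $T$-definable flip $H$ witnessing that $u\in\Ball^r_{G_i}(v)$ but $u\notin\Ball^{30r}_H(v)$. Apply \cref{lem:inc-lemma} to $H$ to obtain a set $S_H$ of size $k^{\Oh(d)}$ and an $S_H$-definable flip $H'$ with $\Ball^r_{H'}(v)\subseteq\Ball^{5r}_H(v)$, so that $H'$ still separates $u$ from $v$ at radius $6r$. Put $S_{i+1} := S_i \cup S_H$ and let $G_{i+1}$ be the $S_{i+1}$-definable flip obtained by refining $G_i$ along $\mathcal{P}_{S_H}$: on every pair of refined parts, inherit the flip choice of $H'$ so that the offending short path in $G_i$ is broken, while the flip choices on pairs not touched by the refinement preserve all separations achieved earlier.

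The key claim is that this process terminates after $\Oh(d)$ iterations. This is a Shelah-style argument: each new obstruction would force the system to shatter a genuinely new configuration on top of $T$, and bounded VC-dimension $d$ caps the number of independent such shatterings by a function of $d$ alone. Since each iteration multiplies $|S|$ by at most $k^{\Oh(d)}$, the process yields a final set $S$ of size $k^{\Oh(d)\cdot\Oh(d)}=k^{\Oh(d^2)}$, as required. The constant $30$ absorbs the factor $5$ of \cref{lem:inc-lemma}, the factor $2$ coming from edges of $G_{i+1}$ not being equal to edges of $H'$ outside the refined parts, and a small slack $3$ needed so that monotonicity of the iteration is preserved across all radii $r$ uniformly.

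The main obstacle is the termination argument: one must pinpoint a monovariant, bounded by $\Oh(d)$ via the VC-dimension hypothesis, that strictly decreases (or a ``dimension'' that strictly increases) whenever a new obstruction is found; this is the step where purely combinatorial manipulations meet the model-theoretic content of bounded VC-dimension. A secondary difficulty is to verify that the refinement step truly inherits the sparsification of $H'$ without re-introducing previously resolved obstructions, which should follow from the fact that $\mathcal{P}_{S_{i+1}}$ refines both $\mathcal{P}_{S_i}$ and $\mathcal{P}_{S_H}$, so the flip choices on the refined classes have enough freedom to simultaneously realise all separations recorded so far.
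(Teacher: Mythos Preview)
Your iterative scheme has a genuine gap at both of the points you yourself flag as difficulties, and neither is repairable as stated.

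\textbf{The refinement step.} You write that $G_{i+1}$ should ``inherit the flip choice of $H'$'' on refined pairs while ``pairs not touched by the refinement preserve all separations achieved earlier''. But every pair of $\PP_{S_{i+1}}$-parts is simultaneously a refinement of some pair of $\PP_{S_i}$-parts \emph{and} of some pair of $\PP_{S_H}$-parts; there is no pair ``not touched''. Where $G_i$ and $H'$ disagree, you must choose one, and either choice can re-create short paths destroyed by the other. What you actually need is a \emph{single} $S_{i+1}$-definable flip that dominates both $G_i$ and $H'$ simultaneously --- but producing one flip that dominates several given flips is exactly the content of the lemma you are trying to prove, so the step is circular.

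\textbf{Termination and the constant.} The claim that ``each new obstruction would force the system to shatter a genuinely new configuration'' and hence at most $\Oh(d)$ obstructions can occur is not substantiated: an obstruction is a triple $(u,v,H)$ with $H$ ranging over $2^{\Theta(K^2)}$ many $T$-definable flips, and nothing ties a new such triple to a new shattered subset of $V(G)$. No monovariant is identified. Moreover, even if the scheme terminated, each round introduces a multiplicative radius blow-up (at least the factor~$5$ from \cref{lem:inc-lemma}), so after $\Oh(d)$ rounds you would get a constant of the form $5^{\Oh(d)}$, not the uniform~$30$ claimed in the statement.

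\textbf{What the paper does instead.} The paper's argument is non-iterative and handles all $\PP_T$-flips at once. The key observation (\cref{lem:diam-complement-bipartite} and \cref{lem:informal-bipartite}) is that for any bipartite graph $B$, after splitting each side into at most two neighbourhood-defined parts, every resulting bipartite piece $B[U_i,V_j]$ has either $\diam(B[U_i,V_j]) \le 6$ or $\diam(\overline{B[U_i,V_j]}) \le 6$. Applying this to every pair of parts $X,Y \in \PP_T$ yields a single explicit $\PP'$-flip $G'$ (with $|\PP'| = \Oh(|\PP_T|^{d+1})$) in which every edge $uv$ of $G'$ satisfies $\dist_H(u,v) \le 6$ for \emph{every} $\PP_T$-flip $H$. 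This gives $\Ball^r_{G'}(v) \subseteq \Ball^{6r}_{\PP_T}(v)$ directly, and then one application of \cref{lem:inc-lemma} converts $G'$ into an $S$-definable flip at the cost of a further factor~$5$, yielding $5 \cdot 6 = 30$.
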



\cref{lem:inc-lemma,lem:new-transfer} show that in graph classes of bounded VC-dimension, all the discussed viewpoints --- in the paragraphs \emph{partition flips}, \emph{definable flips}, and \emph{flip-metrics} --- are essentially equivalent, and one can easily move from one viewpoint to the other depending on specific properties that are useful in a context. We showcase this in the proof of \cref{thm:equivalence}, given is \cref{sec:flip-separability}. However, we expect that the combination of \Cref{lem:inc-lemma,lem:new-transfer} will have further applications in the treatment of monadically dependent classes.

\subsection*{Outline of the proof of \cref{thm:equivalence}.}
Let us close this introduction by sketching the proof of our main result.
We will present an incorrect proof attempt, and explain the changes needed to fix it.

We are given~$G$ in the monadically dependent class~$\Cc$ with weights~$\weight$, and search for a $k$-flip of~$G$ in which all $r$-balls have at~most an $\eps$-proportion of the total weight.
Using the metric conversion results, it is enough to find a bounded set~$S$ for which the above holds in the flip metric~$\dist_S$; we say that such an~$S$ \emph{sparsifies}~$G$.
We will start with $S = V(G)$ which trivially sparsifies~$G$, and show that as long as~$S$ is very large, it can be modified to reduce its size.

We first apply flip-breakability~\cite{flip-breakability} to~$S$:
this states that in the very large set~$S$, we can find large subsets~$A_1,A_2$ with $\dist_F(A_1,A_2) > 2r'$ for some bounded flip~$F$, and some fixed distance~$r'$ we will choose later.
By repeated application, we can instead find~$p \coloneqq \ceil{\frac{1}{\eps}}$ (which is a constant) large subsets $A_1,\dots,A_p$ pairwise at distance~$2r'$.
Then the $r'$-balls around all the~$A_i$s are pairwise disjoint, and one of them, say~$\Ball_F^r(A_i)$, must have weight at most~$\eps \cdot \weight(V(G))$.

We now want to modify~$S$ by adding~$F$ and removing all but a bounded subset~$X$ of~$A_i$.
Since~$F$ is bounded and~$A_i$ is large, this will decrease the size of~$S$.
To show that the resulting set $S' = S - A_i + X + F$ still sparsifies~$G$, we have to consider two cases:
\begin{enumerate}
  \item If~$v$ is close to~$A_i$, meaning $\dist_F(v,A_i) \le r'-r$, then~$F$ already is enough to sparsify the ball around~$v$, as it is contained in~$\Ball_F^r(A_i)$.
  \item On the other hand, if~$v$ at distance more than~$r'-r$ from~$A_i$ for an appropriately large choice of~$r'$,
    then we would like to use a locality result to show that most of~$A_i$ is irrelevant in sparsifying the ball around~$v$:
    we can find a bounded set~$X$ which emulates the behaviour of all vertices in~$A_i$ and whose choice does not depend on~$v$, allowing to delete~$A_i-X$ from~$S$.
\end{enumerate}
The issue is of course that this supposed locality result needed in the second case of the proof fails.
We are considering all $S$-flips for this unbounded set~$S$, and this appears to be far too powerful for this kind of arguments to hold.

A similar and correct locality statement goes informally as follows:
in a large collection~$\mathcal{A}$ of sets of size~$t$, there is some bounded subcollection~$\XX \subseteq \mathcal{A}$ such that
if~$u,v$ are far from all of~$\mathcal{A}$, and~$\dist_S(u,v) > r$ holds for some~$S \in \mathcal{A}$, then it also holds for some~$S \in \XX$.
This is a simple corollary of Gaifman's locality for first-order logic.
Note that while~$\mathcal{A}$ is unbounded, we only consider $S$-flips for sets~$S \in \mathcal{A}$ of bounded size~$t$.

This suggests a way to fix the proof:
rather than having one large set~$S$ and considering \mbox{$S$-flips}, we consider a large family~$\FF$ of sets of size~$t$, and work with the metric $\dist_\FF(u,v) = \max_{S \in \FF} \dist_S(u,v)$ combining $S$-flips for all~$S \in \FF$.
The previous sketch can be adjusted to this new setting without major changes if one picks~$t$ to be the size of flips produced by flip-breakability, and once we reach the last case of the proof, the form of locality required is exactly the one stated above.

\section{Preliminaries}\label{sec:prelims}

For a nonnegative integer $p$, we denote $[p]\coloneqq\{1, \ldots, p\}$.
If $\mathcal X$ is a~set of sets, then $\bigcup \mathcal X$ is their union.

\subsection{Standard graph-theoretic notation}

We denote by $V(G)$ and $E(G)$ the sets of vertices and of edges of a graph $G$, respectively.
A~graph $H$ is a~\emph{subgraph} of a~graph $G$ if $H$ can be obtained from $G$ by vertex and edge deletions.
Graph~$H$ is an~\emph{induced subgraph} of $G$ if $H$ is obtained from $G$ by vertex deletions only.
For $S \subseteq V(G)$, the \emph{subgraph of $G$ induced by $S$}, denoted $G[S]$, is obtained by removing from $G$ all the vertices that are not in $S$ (together with their incident edges).
Then, $G-S$ is a shorthand for $G[V(G)\setminus S]$.

We denote the (open) neighborhood of a vertex $v$ in $G$ by $N_G(v)$ and by $\Ball_G^r(v)$ the set of vertices at distance at~most~$r$ from $v$ in~$G$. We set $\Ball_G^r(S) \coloneqq \bigcup_{v \in S} \Ball_G^r(v)$ for $S \subseteq V(G)$.
The \emph{diameter} of~a~graph~$G$, denoted by $\diam(G)$, is defined as $\max_{u, v \in V(G)} \dist_G(u,v)$ where $\dist_G(u,v)$ is the shortest-path distance between $u$ and $v$.
A~\emph{biclique} in $G$ is made of two non-empty disjoint sets $A, B \subset V(G)$ such that for every $a \in A$ and for every $b \in B$, $ab \in E(G)$.
For any $A, B \subset V(G)$, we denote by $E_G(A,B)$ the edge set $\{uv \in E(G)~|~u\in A, v\in B\}$.
If furthermore, $A$ and $B$ are disjoint, we denote by $G[A,B]$ the bipartite subgraph of $G$ with bipartition $(A,B)$ and edge set $E_G(A,B)$.

\subsection{Flips}\label{sec:flip-def}

Given a~graph $G$ and two not necessarily disjoint subsets $A, B \subseteq V(G)$, the \emph{$(A,B)$-flip} of $G$ is the graph with vertex set $V(G)$ and edge set \[E(G)\ \triangle\  \{ab~: a \in A,\ b \in B,\ a \neq b\},\]
where $\triangle$ is the symmetric difference.
A~\emph{flip} of a~graph $G$ is any \emph{$(A,B)$-flip} of $G$ for some $A, B \subseteq V(G)$.

Given a~partition $\cal P$ of~$V(G)$, a~\emph{$\cal P$-flip} of a~graph~$G$ is any graph obtained from $G$ by performing a~sequence of flips, each of which is a~$(P,P')$-flip for some (possibly equal) $P, P' \in \cal P$.
Note that a~\emph{$\cal P$-flip} of~$G$ is fully determined by specifying a~graph on vertex set $\cal P$ with possible loops.
In particular, there are at~most~$2^{|\cal P|^2}$ many $\cal P$-flips of~$G$, and the sequence of flips can always be chosen to be of length at~most~$|\cal P|^2$.
Then, for any positive integer $k$, a~\emph{$k$-flip} of $G$ is any $\cal P$-flip of $G$ with $|\cal P|\leq k$, i.e., $\cal P$ comprises at most $k$ parts.

For any $S \subseteq V(G)$, the \emph{partition of $V(G)$ defined by $S$}, denoted by $\cal P_S$, is \[\{\{s\}~\colon~s \in S\} \cup \{\{v \in V(G-S)~|~N_G(v) \cap S=S'\}~\colon~S' \subseteq S\},\]
where empty sets are removed from the partition.
Note that $\cal P_S$ is indeed a~partition of~$V(G)$, with at~most $|S|+2^{|S|}$ parts.
An \emph{$S$-definable flip} of~$G$ is simply a~$\cal P_S$-flip of~$G$, and a~\emph{$k$-definable flip} is an~$S$-definable flip for some $S \subseteq V(G)$ of size at~most~$k$.

We associate distances to flips.
Recall that, given $u,v \in V(G)$, $\dist_G(u,v)$ denotes the \emph{shortest-path distance} between $u$ and $v$ in $G$.
Then, for any $\cal P$ partition of $V(G)$, we define
\[\dist^G_{\cal P}(u,v) \coloneqq \max\,\{\,\dist_{G'}(u,v)\colon G'\text{ is a }\cal P\textrm{-flip of }G\,\};\]
and for $S \subseteq V(G)$,
\[\dist^G_S(u,v) \coloneqq \max\,\{\,\dist_{G'}(u,v)\colon G'\text{ is an }S\textrm{-definable flip of }G\,\}.\]
We also define the corresponding balls
\[
  \Ball_\PP^r(v) \coloneqq \{ u \in V(G) : \dist^G_\PP(u,v) \leq r \} 
  \quad
  \text{and}
  \quad
  \Ball_S^r(v) \coloneqq \{ u \in V(G) : \dist^G_S(u,v) \leq r \}, 
\]
where the graph $G$ will always be clear from the context.
In these cases we may also omit $G$ from sub- or superscripts.

\subsection{VC-dimension, transductions, and monadic dependence}\label{lem:vc-trans-mondep}

We recall the standard terminology of VC-theory in the context of graphs.
Given a graph $G$ and a set of vertices $A$, we say that  $G$ \emph{shatters} $A$ if for each $B\subseteq A$ there exists a vertex $u_B$ such that $N(u_B)\cap A=B$.
We define the \emph{VC-dimension} of $G$, denoted as $\VCdim(G)$, as the maximum size of a set $A\subseteq V(G)$ shattered by $G$. For a graph class $\C$ we write $\VCdim(\C)\coloneqq\sup\,\{\,\VCdim(G)\colon G \in \C\,\}$; note that it may happen that $\VCdim(\C)=\infty$ in case graphs from $\C$ contain arbitrarily large shattered sets.
We additionally define the \emph{growth function}, or \emph{shatter function}, of $G$ as the map $\pi_G:\N \to \N$ given by 
\[ \pi_G(n)\coloneqq \max_{A \subseteq V(G), |A|=n}\,|\{\, N_G(v)\cap A\,\colon\,v \in V(G)\,\}|. \]
Evidently $\pi_G(n)\leq 2^n$, while $\VCdim(G)<n \iff \pi_G(m) < 2^m$ for every $m \geq n$. However, a bound on the VC-dimension implies a much stronger bound on the shatter function.

\begin{fact}[Sauer-Shelah Lemma, \cite{Sauer72,Shelah72}]\label{sauershelah}
  Let $G$ be a graph with $k \coloneqq \VCdim(G)$.
  Then for all $n\geq k$, we have
    \[ \pi_G(n) \leq \sum_{i=0}^k \binom{n}{i}.\]
    In particular, $\pi_G(n) \leq {\cal O}(n^k)$. 
\end{fact}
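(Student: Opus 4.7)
The plan is to reduce the graph statement to a purely combinatorial set-system inequality and then establish that inequality via the sharper statement of Pajor.

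First, fix a set $A \subseteq V(G)$ with $|A|=n$ achieving the maximum in the definition of $\pi_G(n)$, and consider the family $\FF \coloneqq \{\,N_G(v)\cap A : v \in V(G)\,\}$ of subsets of $A$. By definition of VC-dimension, $\FF$ does not shatter any subset of $A$ of size greater than $k$. Hence it is enough to prove the following combinatorial claim: \emph{if $X$ is a finite set of size $n$ and $\FF \subseteq \pow(X)$ shatters no subset of size $>k$, then $|\FF| \le \sum_{i=0}^{k}\binom{n}{i}$.}

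The key tool is Pajor's inequality: for every finite $X$ and every $\FF \subseteq \pow(X)$,
\[
  |\FF|\ \le\ |\{\, S\subseteq X : \FF\ \text{shatters}\ S\,\}|.
\]
Granting this, the hypothesis of the combinatorial claim implies that every shattered set has size at most $k$, so the right-hand side is at most $\sum_{i=0}^{k}\binom{n}{i}$, as desired. The final ``$\Oh(n^k)$'' assertion then follows from the crude bound $\sum_{i=0}^{k}\binom{n}{i} \le (k+1)n^k$ valid for $n \ge k$.

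I will prove Pajor's inequality by induction on $|X|$. The base case $|X|=0$ is immediate. For the inductive step, pick any $x\in X$ and split
$\FF_0 \coloneqq \{\,F\in\FF : x\notin F\,\}$ and
$\FF_1 \coloneqq \{\,F\setminus\{x\} : F\in\FF,\ x\in F\,\}$,
both regarded as families on $X-\{x\}$. Then $|\FF|=|\FF_0|+|\FF_1|$, and by induction each $|\FF_i|$ is bounded by the number of subsets of $X-\{x\}$ shattered by $\FF_i$. The heart of the argument is the following bookkeeping: any $S\subseteq X-\{x\}$ shattered by $\FF_0$ or by $\FF_1$ is shattered by $\FF$; and any $S$ shattered by \emph{both} $\FF_0$ and $\FF_1$ has the property that $S\cup\{x\}$ is also shattered by $\FF$, because subsets of $S$ can be realized by elements of $\FF$ missing $x$ (via $\FF_0$) and subsets $T\cup\{x\}$ with $T\subseteq S$ can be realized by elements of $\FF$ containing $x$ (via $\FF_1$). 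A direct counting then shows
\[
  |\{\, S\subseteq X-\{x\} : \FF_0\ \text{shatters}\ S\,\}|+|\{\, S\subseteq X-\{x\} : \FF_1\ \text{shatters}\ S\,\}|
  \ \le\ |\{\, S\subseteq X : \FF\ \text{shatters}\ S\,\}|,
\]
which combined with the inductive bounds gives the claim.

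\textbf{Main obstacle.} The only delicate point is the ``bookkeeping'' step above: correctly matching doubly-shattered subsets of $X-\{x\}$ to two distinct subsets of $X$ (namely $S$ and $S\cup\{x\}$) in order to recover the ``$+|\FF_1|$'' contribution on the left. Everything else is either standard induction or an elementary bound on $\sum_{i=0}^k\binom{n}{i}$.
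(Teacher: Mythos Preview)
Your proof is correct. Note, however, that the paper does not actually prove this statement: it is recorded as a \emph{Fact} with citations to Sauer and Shelah, and is used as a black box. So there is no ``paper's own proof'' to compare against; your argument via Pajor's inequality is a standard and clean way to establish the result, and the bookkeeping step you flagged as delicate is handled correctly (the doubly-shattered $S\subseteq X\setminus\{x\}$ contribute both $S$ and $S\cup\{x\}$ to the shattered sets of $\FF$, and these are distinct from one another and from the singly-shattered sets since exactly one of each pair contains~$x$).
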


We now recall the definition of \emph{first-order transductions}, or \emph{transductions} for short, in the special case of languages over (vertex-colored) graphs; see also the discussion in \cite{Pilipczuk25} for a broader introduction.
A~(vertex-colored) graph is here seen as a relational structure consisting of the vertex set equipped with one binary relation $E(\cdot, \cdot)$ signifying adjacency, and one unary relation per color.
Given a~non-negative integer~$k$, a~transduction $\mathsf T$ is specified by a set of colors $\Sigma$ and a~first-order formula $\varphi(x,y)$ with two free variables in the language of $\Sigma$-colored graphs.
For every (uncolored) graph $G$, we denote by $\mathsf T(G)$ the family of all induced subgraphs of graphs $H$ with $V(H)=V(G)$ satisfying the following: there are subsets $\{U_C\colon C\in \Sigma\}$ of $V(G)$ such that for all distinct $u,v\in V(H)$, we have $uv\in E(H)$ if and only if $\varphi(u,v)$ holds in $G$ with vertices of $U_C$ marked as of color $C$, for each $C\in \Sigma$.

Given a~graph class~$\mathcal C$, we denote by $\mathsf T(\mathcal C)$ the class $\bigcup_{G \in \mathcal C}\mathsf T(G)$.
We say that a~graph class $\mathcal C$ \emph{transduces} a~class $\mathcal D$ if there is a~transduction $T$ such that $\mathcal D \subseteq \mathsf T(\mathcal C)$.
Finally, a~characterization of Baldwin and Shelah~\cite{baldwin-shelah} states that a~graph class is \emph{monadically dependent} (or \emph{monadically NIP}) if it does not transduce the class of all graphs.
We take it as our definition of \emph{monadic dependence}.

The \emph{quantifier rank} of a~(first-order) formula $\varphi$, denoted by $\qr(\varphi)$, is the maximum number of nested quantifiers in $\varphi$.

\label{sec:metric-conversion}
\section{Metric conversion}

\newcommand{\diamBound}{3\xspace}
\newcommand{\diamBoundBip}{6\xspace}

In this section we prove \Cref{lem:new-transfer}. In fact, we shall establish the following stronger result that more generally shows that the metric arising from an arbitrary partition can be approximated by that coming from a definable flip.

\begin{theorem}\label{thm:conversion}
  For every graph $G$ of VC-dimension $d$ and a partition $\PP$ of $V(G)$, there is a set of vertices $S$ of size $\cal{O}(d \cdot |\PP|^{2d+2})$ and an $S$-definable flip $G'$ of $G$ such that for every $r\in\N$, $v \in V(G)$, and $\PP$-flip $H$~of~$G$
  \[
      \Ball^r_{G'}(v) \subseteq 
      \Ball^{30r}_{H}(v).
  \]
\end{theorem}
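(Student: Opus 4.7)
The plan is to construct $S$ by an iterative Sauer--Shelah sampling applied to the bipartite structures between pairs of parts of~$\PP$, and to define $G'$ as an $S$-definable flip that \emph{sparsifies}~$G$ at the granularity of~$\PP_S$. The size bound $|S| = \Oh(d|\PP|^{2d+2})$ strongly suggests $\Oh(d)$ rounds of refinement, each adding at most $\Oh(|\PP|^{2d+1})$ vertices; the factor~$30$ in the distance inclusion suggests combining the factor-$5$ bound of \cref{lem:inc-lemma} with an additional multiplicative loss of~$6$ incurred by handling all $\PP$-flips uniformly instead of a single~flip.

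More concretely, starting with $S^{(0)}=\emptyset$, at each round~$t$ I examine the common refinement $\PP \wedge \PP_{S^{(t)}}$. For each pair of classes $(X,Y)$ in this refinement exhibiting significant inhomogeneity of cross-adjacencies --- measured against a threshold of order $1/|\PP|^{\Oh(1)}$ --- I add to $S^{(t+1)}$ a bounded number of representatives witnessing this inhomogeneity. \Cref{sauershelah} bounds the number of distinct neighborhoods produced at each step by a polynomial in $|S^{(t)}|$ and $|\PP|$, keeping the per-round addition at $\Oh(|\PP|^{2d+1})$; the process terminates after $\Oh(d)$ rounds by a standard Sauer--Shelah dimension argument. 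Once $S$ is fixed, $G'$ is defined as the $\PP_S$-flip that flips each pair $(C,C') \in \PP_S^2$ whenever this reduces the edge density between $C$ and $C'$, producing a canonical sparse approximation.

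For the inclusion $\Ball^r_{G'}(v) \subseteq \Ball^{30r}_H(v)$, it suffices, by iterating along a $G'$-walk, to show that every edge $uv \in E(G')$ is at $H$-distance at most~$30$ in every $\PP$-flip~$H$. Given $u \in C$ and $v \in C'$ with $C,C' \in \PP_S$, the near-uniformity of $\PP_S$-classes with respect to $\PP$-cross-adjacencies --- ensured by the iterative sampling --- yields a path of length at most~$30$ from~$u$ to~$v$ in~$H$, routed through carefully selected vertices of~$S$ that behave predictably under any $\PP$-flip.

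The main obstacle is precisely this uniformity over all $\PP$-flips. A $\PP_S$-class~$C$ may intersect multiple parts of~$\PP$, and the sub-classes $C \cap P$ may behave differently under a given~$H$, since $H$ can flip each $(P, P')$-pair independently. The sampling must refine these crossings finely enough to guarantee short $H$-paths for \emph{every} flip pattern, while keeping $|S|$ polynomial in $|\PP|$. Tuning the inhomogeneity threshold, the witnessing strategy, and the number of rounds so that all these constraints are met simultaneously --- and bounding the resulting path length to~$30$ --- is the technical heart of the proof.
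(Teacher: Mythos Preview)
Your decomposition $30 = 5 \times 6$ is exactly right, and using \cref{lem:inc-lemma} for the factor~$5$ is what the paper does. The gap is entirely in how you obtain the factor~$6$: your iterative density-sampling scheme is not the paper's argument, and as written it does not work.

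The paper first proves an intermediate lemma (\cref{lem:informal}): for any partition~$\PP$ there is a refinement~$\PP'$ with $|\PP'| = \Oh(|\PP|^{d+1})$ and a single $\PP'$-flip~$G_1$ such that every edge of~$G_1$ has endpoints at $\PP$-distance at most~$6$, i.e.\ $\Ball^r_{G_1}(v) \subseteq \Ball^{6r}_\PP(v)$. Then \cref{lem:inc-lemma} applied to this $|\PP'|$-flip yields $S$ of size $\Oh(d\,|\PP'|^2) = \Oh(d\,|\PP|^{2d+2})$ and the factor~$5$, giving $30$. The size bound you were trying to reverse-engineer comes from this composition, not from $\Oh(d)$ rounds of anything.

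The key idea you are missing for the factor~$6$ is a \emph{diameter-of-complement} trick, not density sampling. For a single part $X$, either $G[X]$ or $\overline{G[X]}$ has diameter~$\le 3$; one flips so that the complement of $G_1[X]$ is the small-diameter one, and then any edge of $G_1[X]$ has short distance in both $G[X]$ and $\overline{G[X]}$. For a pair $X,Y$, a bipartite analogue holds: either $G[X,Y]$ or its bipartite complement has diameter~$\le 6$, except in a degenerate case where one refines $X$ (or $Y$) into two parts using the neighbourhood of a \emph{single} vertex. Collecting these at most $|\PP|$ splitting vertices and applying Sauer--Shelah once gives $|\PP'| = \Oh(|\PP|^{d+1})$.

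Your scheme, by contrast, has several unjustified steps. Flipping to minimise density between $\PP_S$-classes does not by itself guarantee that remaining edges have short distance in \emph{both} the flipped and unflipped bipartite graph, which is what ``short in every $\PP$-flip $H$'' requires. The claimed termination ``after $\Oh(d)$ rounds by a standard Sauer--Shelah dimension argument'' is not standard: Sauer--Shelah bounds the number of neighbourhood types over a fixed set, not the depth of an inhomogeneity-driven refinement. And your per-round bound $\Oh(|\PP|^{2d+1})$ ignores that $|\PP \wedge \PP_{S^{(t)}}|$ grows with~$|S^{(t)}|$, so the number of pairs to inspect is not controlled by~$|\PP|$ alone. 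Finally, the obstacle you flag at the end --- that $\PP_S$-classes may cross parts of~$\PP$ --- is precisely the point the paper handles by first producing the $\PP'$-flip (where $\PP'$ refines~$\PP$) and only then invoking \cref{lem:inc-lemma}; your proposal offers no mechanism for it.
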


Note that \Cref{lem:new-transfer} follows immediately from \cref{thm:conversion}. This is because for any vertex set $T$ of size~$k$, the partition $\PP_T$ has size $\Oh(k^d)$ where $d$ is the VC-dimension, by the Sauer-Shelah Lemma (\Cref{sauershelah}); and the definition of the metric $\dist_T$ considers all $\PP_T$-flips.

Towards \Cref{thm:conversion}, we argue that the partition metric can be approximated by the distance metric of a concrete, not necessarily definable, flip. 

\begin{lemma}
  Let~$G$ be a graph and~$\PP$ a partition of the vertex set of $G$.
  Then there exists a refinement~$\PP'$ of~$\PP$ and a $\PP'$-flip~$G'$ of $G$ such that for all~$v \in V(G)$,
  \[ \Ball^r_{G'}(v) \subseteq \Ball^{6r}_\PP(v). \]
  Furthermore, we have  $|\PP'| \le |\PP| \cdot2^{|\PP|}$, and if~$G$ has VC-dimension at most~$d$, then $|\PP'| = \Oof(|\PP|^{d+1})$.
  Moreover, $G'$ and $\PP'$ can be computed from $G$ and $\PP$ in time $\cal{O}(|\PP|^2\cdot |V(G)|^2)$.
  \label{lem:informal}
\end{lemma}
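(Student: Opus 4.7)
The plan is to define $\PP'$ by assigning to each vertex a type vector recording its adjacency to a fixed set of representatives (one per part of $\PP$), and to pick $G'$ as the $\PP'$-flip whose flip bits are determined by those types, so that every edge in $G'$ can be simulated by a short path in every $\PP$-flip of $G$.

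First I would pick an arbitrary representative $r_B \in B$ for each $B \in \PP$, and let $R \coloneqq \{r_B : B \in \PP\}$, so $|R| = |\PP|$. For $u \in V(G)$ define the type $\tau_u \coloneqq N_G(u) \cap R \subseteq R$. The refinement $\PP'$ is obtained by splitting each $A \in \PP$ into the equivalence classes of the relation $u \sim u' \iff \tau_u = \tau_{u'}$. The number of distinct types appearing inside a single part is at most $|\{N_G(u) \cap R : u \in V(G)\}|$, which the Sauer--Shelah lemma (\cref{sauershelah}) bounds by $2^{|R|} = 2^{|\PP|}$ in general and by $\Oh(|R|^d) = \Oh(|\PP|^d)$ when $\VCdim(G) \le d$; summing over parts of $\PP$ yields $|\PP'| \le |\PP| \cdot 2^{|\PP|}$ in general and $|\PP'| = \Oh(|\PP|^{d+1})$ in the bounded VC-dimension case. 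For the $\PP'$-flip $G'$ itself, for every pair of parts $P, Q \in \PP'$ (with $P \subseteq A$, $Q \subseteq B$, $A, B \in \PP$), the flip bit on $P \times Q$ is set as a deterministic function of $\tau_P, \tau_Q$ and of the adjacencies among $R$, chosen so that the bipartite picture in $G'$ between $P$ and $Q$ matches the ``consensus'' adjacency dictated by the representatives and types.

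The key and most technical step is proving $\Ball^r_{G'}(v) \subseteq \Ball^{6r}_{\PP}(v)$, which reduces to showing that every edge $uv \in E(G')$ with $u \in P \subseteq A$ and $v \in Q \subseteq B$ admits a $u$-to-$v$ path of length at most $6$ in every $\PP$-flip $H$ of $G$. The core observation is that all vertices of $P$ (resp.\ $Q$) share the same adjacency to every representative of $R$, so their adjacencies to $r_A$ and $r_B$ inside $H$ are also identical regardless of which flip bits $H$ uses. One then builds in $H$ a path of length at most $6$ using $u$, $v$, the representatives $r_A, r_B$, and, if needed, one same-type substitute in $P$ or $Q$: each of the three independent flip bits of $H$ on the pairs $(A,A)$, $(B,B)$, $(A,B)$ may in the worst case destroy one edge of an intended short path, but such a missing edge can be bypassed by a detour of at most two hops through a representative. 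Carrying out this case analysis over all combinations of flip bits of $H$, and verifying that the constructed path lies in $H$ regardless of how $H$ was flipped, is the main obstacle.

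Finally, the runtime bound is routine: computing the type $\tau_u$ for every vertex takes $\Oh(|\PP| \cdot |V(G)|)$ total time, and assembling the flip-bit table of $G'$ indexed over $\PP' \times \PP'$ then requires at most $\Oh(|\PP|^2 \cdot |V(G)|^2)$ time.
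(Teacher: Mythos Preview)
Your refinement $\PP'$ (split each part by adjacency to a fixed representative set $R$) is fine and yields the stated size bounds; the difficulty is entirely in choosing the flip $G'$ and proving the ball inclusion, and there your plan diverges from the paper and has a real gap.

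The paper does \emph{not} determine flip bits from types. For each pair $X,Y\in\PP$ it analyses the bipartite graph $G[X,Y]$ directly via two structural facts: (i) for any bipartite graph $B$, either $\diam(B)\le 6$, or $\diam(\overline B)\le 6$, or both $B$ and $\overline B$ are disconnected; (ii) in the last case, $B$ is either a disjoint union of two bicliques or has an isolated and a universal vertex on one side. From (ii) one splits $X$ and $Y$ into at most two pieces each (and that split is by the neighbourhood of a single vertex, which is why the VC bound goes through). For each resulting sub-pair $(U_i,V_j)$ one of $G[U_i,V_j]$, $\overline{G[U_i,V_j]}$ has diameter $\le 6$, and the flip bit is chosen by \emph{checking that diameter}. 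The short path certifying $\dist_\PP(u,v)\le 6$ then lives entirely inside $U_i\cup V_j$; no representatives or substitutes are used.

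Your assertion that the flip bit on $(P,Q)$ can be taken as a function of $\tau_P,\tau_Q$ and the adjacencies inside $R$ is the problematic step. When $r_A\notin P$ and $r_B\notin Q$, the types carry \emph{no information at all} about $G[P,Q]$, so two graphs with identical type data can have completely different bipartite structure there. Your fallback of building a detour through $r_A,r_B$ and one substitute is not clearly sound: the bipartite path $u$--$r_B$--$r_A$--$v$ exists in the ``bad'' $\PP$-flip $H$ only when the three booleans $[r_B\in\tau_P]$, $[r_A\in\tau_Q]$, $[r_A r_B\in E(G)]$ all coincide, and when they do not, any detour must use an edge such as $wv$ (for $w\in P$) whose presence depends on $G[P,Q]$ itself---precisely what a type-based rule cannot see. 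You flag this case analysis as ``the main obstacle'' but do not carry it out, and I do not see how to complete it without essentially rediscovering the bipartite diameter/classification lemmas that drive the paper's proof.
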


Evidently, this result together with \Cref{lem:inc-lemma}, which allows to approximate the metric of an arbitrary flip by that of a definable flip, imply \Cref{thm:conversion}. We now turn to the proof of \Cref{lem:informal}. For this, we first use some folklore results.
Let~$\overline{G}$ denote the complement of a graph~$G$, i.e., the graph obtained from $G$ by replacing all edges with non-edges and vice versa.
\begin{lemma}
  \label{lem:diam-complement}
  For any graph~$G$, we have~$\diam(G) \le \diamBound$ or~$\diam(\overline{G}) \le \diamBound$.
\end{lemma}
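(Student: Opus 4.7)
The plan is to prove the contrapositive: if $\diam(G) \geq 4$, then $\diam(\overline{G}) \leq 3$. In fact, the argument works assuming only $\diam(G) \geq 3$, which is the classical folklore bound. Pick any two vertices $u, v \in V(G)$ with $\dist_G(u,v) \geq 3$. Then $uv \notin E(G)$, so $uv \in E(\overline{G})$, and moreover $u$ and $v$ share no common neighbor in $G$ (otherwise $\dist_G(u,v) \leq 2$). Hence every vertex $w \notin \{u,v\}$ is adjacent in $G$ to \emph{at most one} of $u, v$, so in $\overline{G}$ it is adjacent to at least one of $u,v$.

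The next step is to show $\dist_{\overline{G}}(x,y) \le 3$ for all $x, y \in V(G)$. Let $A = \{w \neq u,v : uw \in E(G)\}$ and $B = \{w \neq u,v : vw \in E(G)\}$; by the observation above, $A \cap B = \emptyset$, and $V(G) \setminus \{u,v\}$ partitions into $A$, $B$, and the set $C$ of vertices adjacent to neither $u$ nor $v$ in $G$. In $\overline{G}$, every vertex of $B \cup C$ is adjacent to $u$, and every vertex of $A \cup C$ is adjacent to $v$.

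Now I handle $x,y \in V(G)$ by a routine case analysis. If $x \in \{u,v\}$ or $y \in \{u,v\}$, then since the other endpoint is adjacent in $\overline{G}$ either to the chosen endpoint directly, or to its partner (reached via the edge $uv \in E(\overline{G})$), we get distance at most $2$. If $x, y \notin \{u,v\}$ both avoid $A$ (i.e.\ lie in $B \cup C$), then both are adjacent to $u$ in $\overline{G}$, giving the path $x - u - y$ of length $2$; symmetrically if both avoid $B$. The only remaining subcase is $x \in A \setminus B$ and $y \in B \setminus A$ (up to symmetry), in which $x$ is adjacent to $v$ in $\overline{G}$ and $y$ is adjacent to $u$ in $\overline{G}$, yielding the path $x - v - u - y$ of length $3$.

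This exhausts all cases, so $\diam(\overline{G}) \leq 3 = \diamBound$, proving the lemma. There is no serious obstacle: the only thing to keep track of is the careful case split ensuring the worst case genuinely needs length $3$, realized by the zig-zag path $x - v - u - y$ that uses the edge $uv \in E(\overline{G})$ guaranteed by $\dist_G(u,v) \geq 3$.
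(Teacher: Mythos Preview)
Your proof is correct and follows essentially the same approach as the paper: assume $\diam(G) > 3$, pick $u,v$ at distance more than~$3$, observe that in $\overline{G}$ every vertex is adjacent to $u$ or $v$ and that $uv$ is an edge, hence $\diam(\overline{G}) \le 3$. The paper's version is terser, omitting the explicit case analysis you spell out, but the idea is identical.
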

\begin{proof}
  Assume that~$\diam(G) > 3$, i.e., there are vertices~$u,v$ with~$\dist_G(u,v) > 3$ (possibly~$u,v$ are in distinct connected components).
  Then there is no common neighbor of~$u,v$.
  Thus, in the complement~$\overline{G}$, all vertices are neighbors of either~$u$ or~$v$, and~$uv$ is an edge.
  This implies that~$\diam(\overline{G}) \le 3$.
\end{proof}
Observe that \cref{lem:diam-complement} proves \cref{lem:informal} in the case where the partition~$\PP = \{V(G)\}$ is trivial. Indeed, suppose first that $\diam(G)\leq \diamBound$. We then let $\PP'=\PP$ and $G'=\overline{G}$, which is a~$\PP$ flip of~$G$. Thus, for every edge~$uv \in E(G')$, we have that
the edge~$uv$ is a path of length~$1$ connecting $u$ and $v$ in~$\overline{G}$, and there is always a~path connecting $u$ and $v$ of length~\diamBound in $G$. Consequently $\dist_{\PP}(u,v)\leq \diamBound$ and, more generally, $\dist_{\PP}(u,v) \leq \diamBound \cdot \dist_{G'}(u,v)$. The case when $\diam(\overline{G})\leq \diamBound$ can be handled by a symmetric argument.

To generalize this, we need a variant of \cref{lem:diam-complement} for bipartite graphs.
Let $B = (U,V,E)$ be a bipartite graph, where~$U,V$ are the two sides of the bipartition.
We consider the choice of bipartition (which is not necessarily unique) to be a part of~$B$.
The bipartite complement~$\overline{B}$ of~$B$ is the bipartite graph with edge set $\{uv : \text{$u \in U$, $v \in V$ and $uv \not\in E(B)$} \}$.
A degenerate case appears in this bipartite setting:
it is possible for both~$B$ and~$\overline{B}$ to be disconnected (which is impossible for the usual notion of complement).
\begin{lemma}
  \label{lem:diam-complement-bipartite}
  For any bipartite graph~$B$,
  either~$\diam(B) \le \diamBoundBip$, $\diam(\overline{B}) \le \diamBoundBip$, or both~$B$ and~$\overline{B}$ are disconnected.
\end{lemma}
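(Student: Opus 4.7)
The plan is to adapt the argument of \cref{lem:diam-complement} to the bipartite setting. I assume $\diam(B) > 6$ and $\overline{B}$ is connected (otherwise the third option of the lemma applies, noting that the case $B$ connected, $\diam(B) > 6$, $\overline{B}$ disconnected is easily ruled out, since a disconnection of $\overline{B}$ forces complete bipartite substructures in $B$ that bound $\diam(B)$ by a small constant). I aim to deduce $\diam(\overline{B}) \le 6$. I first reduce to the case of a cross-side anchor pair: vertices $u \in U$ and $v \in V$ with $\dist_B(u, v) > 6$ (allowing $\dist_B = \infty$). If the pair witnessing $\diam(B) > 6$ is same-side, then any $B$-neighbor of one vertex is a cross-side vertex at $B$-distance $> 6$ from the other; the disconnected case is handled by selecting $u \in U$ and $v \in V$ from distinct components.

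Setting $X \coloneqq N_B(u) \subseteq V$ and $Y \coloneqq N_B(v) \subseteq U$, I observe that $uv \in E(\overline{B})$ (from $\dist_B(u,v) > 1$) and that $X \times Y \subseteq E(\overline{B})$ (from $\dist_B(u,v) > 3$, since any $B$-edge in $X \times Y$ yields a length-3 $B$-path). The heart of the proof is the \emph{covering claim}: for every $w \in V(B) \setminus \{u, v\}$ with $w \in V$, either $\dist_{\overline{B}}(w, u) \le 3$ or $\dist_{\overline{B}}(w, v) \le 2$, and symmetrically for $w \in U$. The easy case is $w \in V \setminus X$, where the edge $wu \in E(\overline{B})$ is direct. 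For $w \in X$, I aim to find either a length-3 $\overline{B}$-path $w \to u' \to x' \to u$ through $u' \in U \setminus N_B(w)$ and $x' \in V \setminus (X \cup N_B(u'))$, or a length-2 path $w \to u' \to v$ through $u' \in U \setminus (N_B(w) \cup Y)$.

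The main obstacle is to verify the covering claim when both candidate paths fail. A careful analysis shows that in this configuration we must have $U \setminus N_B(w) \subseteq Y$, and for every $z \in Y \setminus N_B(w)$ the identity $N_B(z) = V \setminus X$ holds (the inclusion $N_B(z) \supseteq V \setminus X$ from failure of the length-3 path; the reverse inclusion from $\dist_B(u, v) > 3$ ruling out $N_B(z) \cap X \neq \emptyset$, which would otherwise yield a length-3 $B$-path from $u$ to $v$). I expect this to force the $\overline{B}$-component of $w$ to lie within $\{w\} \cup (Y \setminus N_B(w))$, contradicting the assumed connectivity of $\overline{B}$. With the covering claim established, the bound $\diam(\overline{B}) \le 6$ follows by combining short $\overline{B}$-paths to the anchors, the edge $uv$, and, in the delicate cross-side subcase, the biclique $X \times Y$ in $\overline{B}$, which provides length-1 shortcuts ensuring that the worst-case diameter stays within 6 despite naive triangle-inequality bounds suggesting 7.
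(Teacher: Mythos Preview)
Your covering claim is false, and the expected contradiction in the ``main obstacle'' case does not hold. Consider $U=\{u,u_1,u_2\}$, $V=\{v,w,x\}$, with $E(B)=\{uw,ux,u_2w,u_1v\}$. Then $X=N_B(u)=\{w,x\}$, $Y=N_B(v)=\{u_1\}$, and $\dist_B(u,v)=\infty$. The bipartite complement $\overline{B}$ is the path $w\,\text{--}\,u_1\,\text{--}\,x\,\text{--}\,u_2\,\text{--}\,v\,\text{--}\,u$, which is connected with $\dist_{\overline B}(w,u)=5$ and $\dist_{\overline B}(w,v)=4$, so neither bound in your covering claim holds for~$w$. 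Your ``main obstacle'' analysis correctly yields $U\setminus N_B(w)=\{u_1\}\subseteq Y$ and $N_B(u_1)=V\setminus X=\{v\}$, but the conclusion that the $\overline B$-component of $w$ lies in $\{w\}\cup(Y\setminus N_B(w))=\{w,u_1\}$ is wrong: the $\overline B$-neighbours of $u_1$ are $V\setminus N_B(u_1)=X=\{w,x\}$, so from $u_1$ you reach $x$ and then the rest of the graph. In general, once you know $N_B(z)=V\setminus X$ for $z\in Y\setminus N_B(w)$, the $\overline B$-neighbourhood of $z$ is \emph{all} of $X$, not just~$w$.

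By contrast, the paper's proof assumes without loss of generality that $B$ (rather than $\overline B$) is connected, takes a shortest path $x_0,\dots,x_6$ of length exactly~$6$ in~$B$, and observes that no vertex can be $B$-adjacent to both $x_1,x_5$ (same side, distance~$4$) or to both $x_0,x_6$; hence $\{x_0,x_1,x_5,x_6\}$ dominates $\overline B$, and the path $x_5,x_0,x_3,x_6,x_1$ of length~$4$ in~$\overline B$ connects these four anchors, giving $\diam(\overline B)\le 6$. The key difference is the use of four anchors drawn from a genuine shortest path, whereas your two-anchor approach does not control the situation when~$B$ is disconnected. (Your argument could be repaired: pushing the obstacle analysis further, one finds that $(Y,V\setminus X)$ becomes complete bipartite in~$B$ with no other edges touching $Y\cup(V\setminus X)$, so $\overline B$ consists of the two bicliques $(X,Y)$ and $(U\setminus Y,V\setminus X)$ joined by at least one edge, yielding $\diam(\overline B)\le 5$ directly --- but that is not the contradiction you claimed.)
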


\begin{proof} See \cref{fig:bicomplement-diam} for an illustration.
  Assume that one of~$B,\overline{B}$ is connected, say~$B$.
  Suppose furthermore that~$\diam(B) > 6$. We must then show~$\diam(\overline{B}) \le 6$.
  Using that~$B$ is connected, we can pick vertices~$x,y$ at distance exactly~6,
  and a shortest path $x = x_0, \dots, x_6 = y$ between them.
  This path alternates between the sides~$U,V$ of the bipartition, say $x_i \in U$ for even~$i$ and $x_i \in V$ for odd~$i$.

  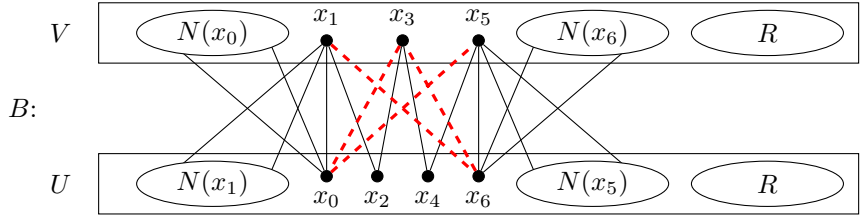
\begin{figure}[ht]
    \centering
    \begin{tikzpicture}
      \tikzstyle{vertex}=[black,fill,draw,circle,inner sep=0pt, minimum width=1.5mm]
      \node at (-4,1) {$B$:};
      \draw (-3,-0.4) rectangle (7,0.4);
      \node at (-3.5,0) {$U$};
      \draw (-3,1.6) rectangle (7,2.4);
      \node at (-3.5,2) {$V$};

      \foreach \i in {0,2,4,6}{
        \node[vertex, label=below:$x_\i$] (x\i) at (\i/3,0.1) {};
      }
      \foreach \i in {1,3,5}{
        \node[vertex, label=above:$x_\i$] (x\i) at (\i/2-0.5,1.9) {};
      }
      \draw (x0) -- (x1) -- (x2) -- (x3) -- (x4) -- (x5) -- (x6);
      \draw[dashed, very thick,red] (x1) -- (x6) -- (x3) -- (x0) -- (x5);

      \foreach \i/\x/\y in {0/-1.5/2,1/-1.5/0,5/3.5/0,6/3.5/2}{
        \draw (x\i) -- (\x+0.7,\y) (x\i) -- (\x-0.7,\y);
        \draw[fill=white] (\x,\y) ellipse (1 and 0.3);
        \node at (\x,\y) {$N(x_\i)$};
      }

      \draw[fill=white] (5.8,0) ellipse (1 and 0.3);
      \draw[fill=white] (5.8,2) ellipse (1 and 0.3);
      \node at (5.8,0) {$R$};
      \node at (5.8,2) {$R$};
    \end{tikzpicture}
    \caption{%
      A bipartite graph~$B$ with a shortest path $x_1,\dots,x_6$ of length~$6$.
      Note that the neighborhoods of~$x_0,x_1,x_5,x_6$ must be disjoint.
      The set~$R$ denotes what remains outside these neighborhoods.
      The red-dashed edges form a path~$P$ of length~4 in the bipartite complement~$\overline{B}$ which dominates all of~$\overline{B}$,
      i.e., no vertex of~$B$ is adjacent to all of~$P$.
      This ensures $\diam(\overline{B}) \le 6$.
    }
    \label{fig:bicomplement-diam}
  \end{figure}

  Now a vertex~$u \in U$ cannot be adjacent to both~$x_1$ and~$x_5$, as these vertices are at distance~$4$.
  Similarly, a vertex $v \in V$ cannot be adjacent to both~$x_0$ and~$x_6$.
  It follows that in~$\overline{B}$, all vertices are adjacent to one of~$x_0,x_1,x_5,x_6$,
  and the latter are connected by the following path of length~4: $(x_5,x_0,x_3,x_6,x_1)$.
  It follows that~$\diam(\overline{B}) \le 6$.
\end{proof}

The degenerate cases of \cref{lem:diam-complement-bipartite} have a simple classification.
\begin{lemma}
  \label{lem:bipartite-disco-classification}
  Let~$B = (U,V,E)$ be a bipartite graph such that both~$B$ and~$\overline{B}$ are disconnected. Then
  \begin{enumerate}
    \item \label{it1:bdc} either~$B$ is the disjoint union of two bicliques, or
    \item \label{it2:bdc} up to swapping~$U$ and~$V$, there are vertices~$v^-,v^+ \in V$
      such that~$v^-$ is isolated and~$v^+$ is adjacent to all of~$U$.
  \end{enumerate}
\end{lemma}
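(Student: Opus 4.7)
\medskip
\noindent\textbf{Proof plan.} The plan is to perform a case analysis on the connected-component structure of $B$. Decompose $V(B)$ as $U_0 \sqcup V_0 \sqcup V(\mathcal{C}_1) \sqcup \cdots \sqcup V(\mathcal{C}_m)$, where $U_0 \subseteq U$ and $V_0 \subseteq V$ are the vertices isolated in $B$, and $\mathcal{C}_1, \ldots, \mathcal{C}_m$ are the non-trivial components (each having non-empty intersection with both sides, since any edge of $B$ has one endpoint on each side). Write $U_i \coloneqq V(\mathcal{C}_i) \cap U$ and $V_i \coloneqq V(\mathcal{C}_i) \cap V$. The key observations driving the whole argument are: in $\overline{B}$, every $u \in U_0$ is adjacent to all of $V$, every $v \in V_0$ is adjacent to all of $U$, and every pair $(u,v) \in U_i \times V_j$ with $i \neq j$ is an edge. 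Dually, any vertex of $B$ that is universal to the opposite side becomes isolated in $\overline{B}$.

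First I rule out $m \geq 3$: the complete bipartite structure between the various $U_i$s and $V_j$s ($i \neq j$) chains them into a single $\overline{B}$-component (for instance, fixing any $u \in U_1$ puts $\{u\} \cup V_2 \cup V_3$ together, and then each $u' \in U_2, U_3$ joins via $V_1$, while isolated vertices of $B$ join via their $\overline{B}$-universal behaviour), contradicting $\overline{B}$ disconnected. Next I treat $m = 2$. If either $\mathcal{C}_i$ contains a non-edge $u v \in U_i \times V_i$, this is an $\overline{B}$-edge that glues together the two ``natural'' $\overline{B}$-components $U_1 \cup V_2$ and $U_2 \cup V_1$ via the dense cross-structure, and similarly any isolated vertex of $B$ would connect everything in $\overline{B}$. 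Hence both $\mathcal{C}_i$ must be bicliques with $U_0 = V_0 = \emptyset$, yielding item~\ref{it1:bdc}.

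For $m \leq 1$ I argue as follows. If $m = 0$ then $B$ is edgeless and $\overline{B}$ is the complete bipartite graph $K_{|U|,|V|}$, which is disconnected only if one side is empty; the other side then contains vertices $v^-, v^+$ vacuously satisfying item~\ref{it2:bdc}. If $m = 1$, then $B$-disconnectedness forces $U_0 \cup V_0 \neq \emptyset$, but $U_0$ and $V_0$ cannot both be non-empty, as a single $u_0 \in U_0$ and $v_0 \in V_0$ would link all of $V$ and all of $U$ in $\overline{B}$. So, after swapping $U$ and $V$ if necessary, assume $V_0 = \emptyset$ and pick $v^- \in U_0$. Finally, the existence of $v^+ \in U_1$ that is $V$-universal in $B$ follows from $\overline{B}$-disconnectedness: otherwise every $u \in U_1$ has a non-neighbour in $V_1$ and thus joins the $\overline{B}$-component containing $U_0 \cup V_1$, forcing $\overline{B}$ connected.

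The main subtlety, where I expect the bookkeeping to be most delicate, is the $m = 1$ case: one must carefully combine the forced cross-structure of $\overline{B}$ with the internal non-edges of $\mathcal{C}_1$ to pin down exactly the configuration ``isolated-vertex plus universal-vertex on the same side'', and the $U \leftrightarrow V$ symmetry has to be resolved by a WLOG argument driven by which of $U_0, V_0$ is non-empty. Elsewhere, the only real work is the routine ``chasing'' that shows that any deviation from the two classified structures always creates a $\overline{B}$-path connecting what would otherwise be separate components.
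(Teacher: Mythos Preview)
Your proposal is correct and uses the same underlying observations as the paper's proof: cross-edges between distinct $B$-components become bicliques in $\overline{B}$, and isolated vertices in $B$ become universal in $\overline{B}$. The only difference is organisational: the paper branches first on ``is there an isolated vertex?'' (if so, case~\ref{it2:bdc} follows in two lines; if not, all components are non-trivial and the $k\ge 3$ / $k=2$ analysis gives case~\ref{it1:bdc}), whereas you branch on the number $m$ of non-trivial components. Your organisation forces you to handle a couple of extra sub-cases (ruling out isolated vertices separately when $m=2$, and the degenerate $m=0$ case), but the content is the same.
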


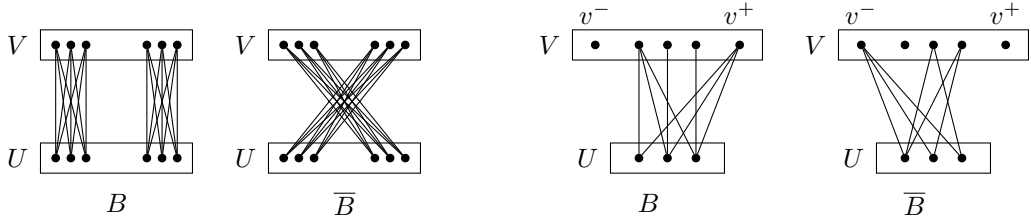
\begin{figure}[ht]
  \centering
  \begin{tikzpicture}
    \tikzstyle{vertex}=[black,fill,draw,circle,inner sep=0pt, minimum width=1mm]

    \begin{scope}
      \node at (1,-0.6) {$B$};

      \draw (0,-0.2) rectangle (2,0.2);
      \node at (-0.3,0) {$U$};
      \draw (0,1.3) rectangle (2,1.7);
      \node at (-0.3,1.5) {$V$};

      \foreach \i/\x in {1/0.2,2/0.4,3/0.6, 4/1.4,5/1.6,6/1.8}{
        \node[vertex] (u\i) at (\x,0) {};
        \node[vertex] (v\i) at (\x,1.5) {};
      }
      \foreach \i in {1,2,3}{\foreach \j in {1,2,3}{
        \draw (u\i) -- (v\j);
      }}
      \foreach \i in {4,5,6}{\foreach \j in {4,5,6}{
        \draw (u\i) -- (v\j);
      }}
    \end{scope}
    \begin{scope}[xshift=3cm]
      \node at (1,-0.6) {$\overline{B}$};

      \draw (0,-0.2) rectangle (2,0.2);
      \node at (-0.3,0) {$U$};
      \draw (0,1.3) rectangle (2,1.7);
      \node at (-0.3,1.5) {$V$};

      \foreach \i/\x in {1/0.2,2/0.4,3/0.6, 4/1.4,5/1.6,6/1.8}{
        \node[vertex] (u\i) at (\x,0) {};
        \node[vertex] (v\i) at (\x,1.5) {};
      }
      \foreach \i in {1,2,3}{\foreach \j in {4,5,6}{
        \draw (u\i) -- (v\j) (v\i) -- (u\j);
      }}
    \end{scope}

    \begin{scope}[xshift=7cm]
      \node at (1,-0.6) {$B$};

      \draw (0.5,-0.2) rectangle (2,0.2);
      \node at (0.2,0) {$U$};
      \draw (0,1.3) rectangle (2.5,1.7);
      \node at (-0.3,1.5) {$V$};

      \node[vertex, label={[label distance=1mm]above:$v^-$}] (vm) at (0.3,1.5) {};
      \node[vertex, label={[label distance=1mm]above:$v^+$}] (vp) at (2.2,1.5) {};

      \foreach \i in {1,...,3}{
        \node[vertex] (u\i) at (0.5+0.375*\i,0) {};
        \node[vertex] (v\i) at (0.5+0.375*\i,1.5) {};
        \draw (vp) -- (u\i);
      }

      \foreach \v/\u in {1/1,1/2,1/3,2/2,3/3}{
        \draw (u\u) -- (v\v);
      }
    \end{scope}

    \begin{scope}[xshift=10.5cm]
      \node at (1,-0.6) {$\overline{B}$};

      \draw (0.5,-0.2) rectangle (2,0.2);
      \node at (0.2,0) {$U$};
      \draw (0,1.3) rectangle (2.5,1.7);
      \node at (-0.3,1.5) {$V$};

      \node[vertex, label={[label distance=1mm]above:$v^-$}] (vm) at (0.3,1.5) {};
      \node[vertex, label={[label distance=1mm]above:$v^+$}] (vp) at (2.2,1.5) {};

      \foreach \i in {1,...,3}{
        \node[vertex] (u\i) at (0.5+0.375*\i,0) {};
        \node[vertex] (v\i) at (0.5+0.375*\i,1.5) {};
        \draw (vm) -- (u\i);
      }

      \foreach \v/\u in {2/1,2/3,3/1,3/2}{
        \draw (u\u) -- (v\v);
      }
    \end{scope}

  \end{tikzpicture}
  \caption{The two types of bipartite graphs~$B$ with both~$B$ and~$\overline{B}$ disconnected (\cref{lem:bipartite-disco-classification}).}
  \label{fig:bipartite-disco-classification}
\end{figure}

\begin{proof}
  Assume first that~$B$ contains an isolated vertex~$v^-$, which without loss of generality is in~$V$.
  Thus in~$\overline{B}$, $U \cup \{v^-\}$ is connected.
  If all vertices of~$V$ had a non-neighbor in~$U$, then~$\overline{B}$ would be connected, see \cref{subfig:isolated-vertex}.
  Hence, there must be some~$v^+ \in V$ connected to all of~$U$, and case~\ref{it2:bdc} of the lemma holds.

  Otherwise, every connected component of $B$ has at least two vertices. Let $C_1,\dots,C_k$ be the components of $B$, where $k\ge 2$, for we assume that $B$ is disconnected.
  Call~$U_i = U \cap C_i$ and~$V_i = V \cap C_i$ the two sides of component $C_i$; they are non-empty since $C_i$ has more than one vertex.
  Note that for all $i \neq j$, $(U_i,V_j)$ is a biclique in~$\overline{B}$.
  If there are at least~3 components $C_i$, it is then simple to check that~$\overline{B}$ is connected, a contradiction (see \cref{subfig:three-components}).

  We finally assume that there are only two components~$C_1,C_2$, each with at least two vertices.
  Once again, $(U_1,V_2)$ and~$(U_2,V_1)$ are bicliques in~$\overline{B}$.
  If in either~$C_1$ or~$C_2$ there is some non-edge between~$U$ and~$V$,
  then the corresponding edge in~$\overline{B}$ connects these two bicliques, and thus all of~$\overline{B}$, a contradiction (see \cref{subfig:non-complete-component}).
  Therefore,~$C_1$ and~$C_2$ are themselves bicliques in~$B$, so case~\ref{it1:bdc} of the lemma holds.
\end{proof}

\begin{figure}[ht]
  \centering
  \begin{subfigure}[t]{0.3\textwidth}
    \centering
    \begin{tikzpicture}
      \tikzstyle{vertex}=[black,fill,draw,circle,inner sep=0pt, minimum width=1.5mm]
        \draw (-0.2,-0.2) rectangle (3.2,0.2);
        \node at (-0.5,0) {$U$};
        \draw (-0.2,1.3) rectangle (3.2,1.7);
        \node at (-0.5,1.5) {$V$};

        \node[vertex, label={[label distance=1mm]above:$v^-$}] (vm) at (0,1.5) {};
        \foreach \i in {0,...,4}{
          \node[vertex] (u\i) at (1+0.5*\i,0) {};
          \node[vertex] (v\i) at (1+0.5*\i,1.5) {};

          \draw[dashed] (vm) -- (u\i);
        }
        \draw[dashed] (v0) -- (u1) (v1) -- (u1) (u0) -- (v2) -- (u3) (u2) -- (v3) -- (u4) (v4) -- (u4);
    \end{tikzpicture}
    \caption{Vertex~$v^- \in V$ is isolated, and no~$v \in V$ dominates~$U$, i.e., all of them have a non-edge to~$U$.}
    \label{subfig:isolated-vertex}
  \end{subfigure}
  \hspace{0.01\textwidth}
  \begin{subfigure}[t]{0.3\textwidth}
    \centering
    \begin{tikzpicture}
      \tikzstyle{vertex}=[black,fill,draw,circle,inner sep=0pt, minimum width=1mm]
        \draw (-0.2,-0.2) rectangle (3.2,0.2);
        \node at (-0.5,0) {$U$};
        \draw (-0.2,1.3) rectangle (3.2,1.7);
        \node at (-0.5,1.5) {$V$};

        \foreach \i in {1,2,3}{
          \draw (\i*1.2-0.9,0.75) ellipse (0.3 and 1);
          \node at (\i*1.2-0.9,2) {$C_\i$};
        }

        \foreach \i/\x in {0/0.2,1/0.4,2/1.4,3/1.6,4/2.6,5/2.8}{
          \node[vertex] (u\i) at (\x,0) {};
          \node[vertex] (v\i) at (\x,1.5) {};
        }

        \foreach \i/\j in {0/2,0/3,0/4,0/5,1/2,1/3,1/4,1/5,2/4,2/5,3/4,3/5}{
          \draw[dashed] (u\i) -- (v\j) (v\i) -- (u\j);
        }
    \end{tikzpicture}
    \caption{There are at least~3 non-trivial connected components $C_1,C_2,C_3$.}
    \label{subfig:three-components}
  \end{subfigure}
  \hspace{0.01\textwidth}
  \begin{subfigure}[t]{0.3\textwidth}
    \centering
    \begin{tikzpicture}
      \tikzstyle{vertex}=[black,fill,draw,circle,inner sep=0pt, minimum width=1mm]
        \draw (-0.2,-0.2) rectangle (3.2,0.2);
        \node at (-0.5,0) {$U$};
        \draw (-0.2,1.3) rectangle (3.2,1.7);
        \node at (-0.5,1.5) {$V$};

        \foreach \i in {1,2}{
          \draw (\i*1.8-1.2,0.75) ellipse (0.7 and 1);
          \node at (\i*1.8-1.2,2) {$C_\i$};
        }

        \foreach \i/\x in {1/0.3,2/0.6,3/0.9, 4/2.1,5/2.4,6/2.7}{
          \node[vertex] (u\i) at (\x,0) {};
          \node[vertex] (v\i) at (\x,1.5) {};
        }
        \foreach \i in {1,2,3}{\foreach \j in {4,5,6}{
          \draw[dashed] (u\i) -- (v\j) (v\i) -- (u\j);
        }}
        \draw[dashed] (u2) -- (v1);
    \end{tikzpicture}
    \caption{The graph has exactly~2 non-trivial components, with at least one non-edge in one of them.}
    \label{subfig:non-complete-component}
  \end{subfigure}

  \caption{%
    Impossible situations in the proof of \cref{lem:bipartite-disco-classification},
    in which enough non-edges (drawn with dashes) are found for~$\overline{B}$ to be connected.
  }
\end{figure}
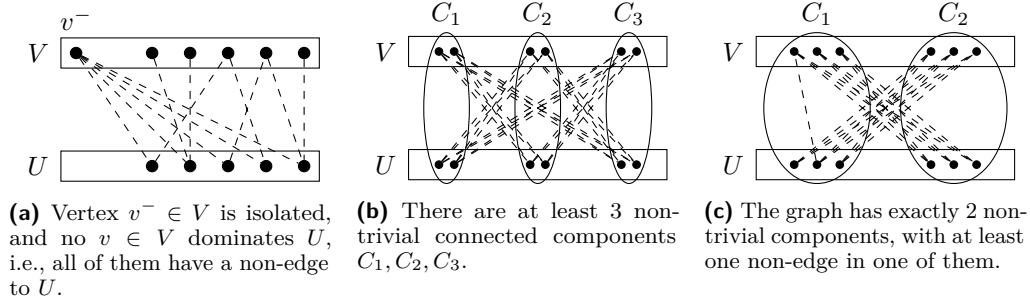

Next, we prove \cref{lem:informal} for bipartite graphs when the partition~$\PP$ is trivial.
In this context, a flip of a bipartite graph~$B = (U,V,E)$ should preserve the fixed bipartition~$(U,V)$,
i.e., we only consider flips between some subset of~$U$ and some subset of~$V$.
We call this a \emph{bipartite flip}.
\begin{lemma}
  \label{lem:informal-bipartite}
  For every bipartite graph~$B = (U,V,E)$, there are partitions~$(U_1,U_2)$ of $U$ and~$(V_1,V_2)$ of $V_2$,
  and a $\{U_1,U_2,V_1,V_2\}$-bipartite flip~$B'$ such that if~$uv \in E(B')$, then~$\dist_B(u,v) \le \diamBoundBip$ and $\dist_{\overline{B}}(u,v) \le \diamBoundBip$.
  Furthermore, we either have $U_2=\emptyset$ or $U_1 = N(v)$ for some~$v \in V$; and similarly, either $V_2=\emptyset$ or $V_1=N(u)$ for some $u\in U$.
\end{lemma}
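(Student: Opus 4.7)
The plan is a case analysis driven by \cref{lem:diam-complement-bipartite} applied to $B$, which yields three possibilities: $\diam(B) \le \diamBoundBip$, $\diam(\overline{B}) \le \diamBoundBip$, or both $B$ and $\overline{B}$ are disconnected.

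In the first case I would take $B' \coloneqq \overline{B}$ with the trivial partitions $U_1 = U$, $V_1 = V$, $U_2 = V_2 = \emptyset$: every edge $uv \in E(B') = E(\overline{B})$ satisfies $\dist_{\overline{B}}(u,v) = 1$ and $\dist_B(u,v) \le \diam(B) \le \diamBoundBip$, with the latter bound holding for \emph{all} pairs once $B$ has small diameter. The case $\diam(\overline{B}) \le \diamBoundBip$ is handled symmetrically, taking $B' \coloneqq B$. In effect, \cref{lem:diam-complement-bipartite} reduces one of the two required distance bounds to the trivial length-$1$ edge provided by $B'$ itself.

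If both $B$ and $\overline{B}$ are disconnected, I would invoke \cref{lem:bipartite-disco-classification}. In item~(\ref{it1:bdc}), where $B = (U_1, V_1) \sqcup (U_2, V_2)$ is a disjoint union of two bicliques, I would reuse this partition and let $B'$ be the flip toggling both pairs $(U_1, V_1)$ and $(U_2, V_2)$. The resulting $B'$ is edgeless, so the distance condition is vacuous, while the partition requirement is met since $V_1 = N_B(u)$ for any $u \in U_1$ and $U_1 = N_B(v)$ for any $v \in V_1$.

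The real work is in item~(\ref{it2:bdc}): without loss of generality there exist $v^-, v^+ \in V$ with $v^-$ isolated in $B$ and $v^+$ adjacent to all of $U$. The plan is to pick any $u_0 \in U$, set $V_1 \coloneqq N_B(u_0)$, $V_2 \coloneqq V \setminus V_1$, $U_1 \coloneqq U$, $U_2 \coloneqq \emptyset$, and let $B'$ be the flip toggling only the pair $(U, V_1)$. One checks that $v^+ \in V_1$, $v^- \in V_2$, and that the flip renders $u_0$, $v^+$, and $v^-$ all isolated in $B'$. For any surviving edge $uv \in E(B')$ with $v \in V_1$ we have $uv \notin E(B)$, giving a length-$1$ path in $\overline{B}$, while $u - v^+ - u_0 - v$ is a length-$3$ path in $B$ (using that $v^+$ is universal in $B$ and $v \in N_B(u_0)$); the case $v \in V_2$ is symmetric with $v^-$ providing a length-$3$ hub-path in $\overline{B}$. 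I expect the main obstacle to be identifying that the neighborhood of a single judiciously chosen vertex $u_0$ gives the right partition, and that the vertices $v^+, v^-$ supplied by \cref{lem:bipartite-disco-classification} serve simultaneously as hubs yielding short transitions on both sides of the bipartite complementation.
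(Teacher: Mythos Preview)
Your proposal is correct and follows essentially the same approach as the paper: the case split via \cref{lem:diam-complement-bipartite} and \cref{lem:bipartite-disco-classification}, the partition choices, and the resulting flip $B'$ all coincide with the paper's construction. The only cosmetic difference is that the paper packages the distance verification uniformly---after fixing the partition it observes that each piece $B[U_i,V_j]$ or its bipartite complement is connected and re-invokes \cref{lem:diam-complement-bipartite} locally---whereas you verify the bounds in case~(\ref{it2:bdc}) directly via the explicit hub paths through $v^+,u_0$ in $B$ and $v^-,u_0$ in $\overline{B}$ (which in fact gives the sharper bound $3$ rather than $6$).
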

\begin{proof}
  We apply \cref{lem:bipartite-disco-classification} to choose the appropriate partition:
  \begin{enumerate}
    \item If~$B$ or~$\overline{B}$ is connected, then the partition is trivial: $U_1 = U, U_2=\emptyset, V_1 = V, V_2=\emptyset$.
    \item If~$B$ is the disjoint union of two bicliques, then we partition accordingly so that the bicliques are~$(U_1,V_1)$ and~$(U_2,V_2)$.
    \item If~$v^-,v^+ \in V$ are isolated and fully adjacent to~$U$ respectively,
      then we pick~$u \in U$ arbitrarily and let~$V_1 = N(u)$ and $V_2 = V \setminus V_1$. The partition of~$U$ remains trivial: $U_1 = U$ and $U_2=\emptyset$.
  \end{enumerate}
  In all three cases, for each nonempty~$U_i,V_j$ the bipartite graph~$B[U_i,V_j]$ or its complement is connected.
  Furthermore, the partitions~$(U_1,U_2)$ and~$(V_1,V_2)$ are of the required form.

  For all nonempty~$U_i,V_j$, \cref{lem:diam-complement-bipartite} gives that either~$B[U_i,V_j]$ or its complement has diameter at most~\diamBoundBip.
  In the former case, we flip the adjacency relation between~$U_i$ and~$V_j$,
  and the flip~$B'$ is the one obtained by doing so for each pair~$U_i,V_j$.
  By construction, $\overline{B'[U_i,V_j]}$ has diameter at most~\diamBoundBip~for all nonempty~$U_i,V_j$.
  Thus, if~$uv \in E(B')$ with~$u \in U_i$, $v \in V_j$, then~$u,v$ are at distance at most~\diamBoundBip~in both~$B'[U_i,V_j]$ and its complement.
  Finally, $B$ contains either~$B'[U_i,V_j]$ or~$\overline{B'[U_i,V_j]}$ as an induced subgraph,
  hence~such $u,v$ are also at distance at most~\diamBoundBip~in both~$B$ and~$\overline{B}$.
\end{proof}

Having established the above, we proceed with the proof of \Cref{lem:informal}. 

\begin{proof}[Proof of \cref{lem:informal}]
  Let~$G$ be a graph and~$\PP$ a partition of~$V(G)$. We construct a flip~$G'$ of~$G$ as follows:
  \begin{itemize}
    \item For each part~$X \in \PP$, we flip~$(X,X)$ if necessary to ensure that~$\diam(\overline{G'[X]}) \le 3$ has diameter at most~\diamBound, using \cref{lem:diam-complement}.
    \item For each pair of distinct parts~$X,Y \in \PP$, we apply \cref{lem:informal-bipartite} to the bipartite graph $G[X,Y]$.
      This gives partitions~$\PP_{X,Y}$ of~$X$ and~$\PP_{Y,X}$ of~$Y$ into at most two parts each, and a $(\PP_{X,Y} \cup \PP_{Y,X})$ bipartite flip~$G'_{XY}$ of~$G[X,Y]$, ensuring the following:
      whenever~$uv$ is an edge in~$G'_{XY}$, then~$u,v$ are at distance at most~\diamBoundBip in both~$G[X,Y]$ and~$\overline{G[X,Y]}$.
      We apply the same flip in~$G$ to obtain~$G'$.
  \end{itemize}
  Define~$\PP'$ to be the partition of~$V(G)$ whose parts are of the form $\bigcap_{Y \in \PP} P_{X,Y}$
  for all possible choices of~$X \in \PP$ and~$\{P_{X,Y} \in \PP_{X,Y}\}_{Y \in \PP}$.
  Clearly, the partition~$\PP'$ refines both~$\PP$ and all~$\PP_{X,Y}$,
  i.e., any part of the latters can be obtained as union of parts of~$\PP'$.
  It follows that~$G'$ is a $\PP'$-flip of~$G$.

  Consider an edge~$uv \in E(G')$, and let~$X,Y$ be the parts of $\PP$ containing~$u$ and~$v$, respectively (possibly $X=Y$).
  Further, consider an arbitrary $\PP$-flip~$G''$ of~$G$.
  If~$X \neq Y$, notice that~$G'[X,Y]$ is equal to the flip~$G'_{XY}$ provided by \cref{lem:informal-bipartite}.
  Thus,~$uv$ is an edge in~$G'_{XY}$, which implies that~$u,v$ are at distance at most~$\diamBoundBip$ in both~$G[X,Y]$ and~$\overline{G[X,Y]}$.
  Since~$G''$ is a $\PP$-flip of~$G$ and~$X,Y \in \PP$, the restriction~$G''[X,Y]$ coincides with either~$G[X,Y]$ or~$\overline{G[X,Y]}$, hence $\dist_{G''[X,Y]}(u,v) \le \diamBoundBip$, and a fortiori $\dist_{G''}(u,v) \le \diamBoundBip$.
  And in the case~$X = Y$, we have $\dist_{G'[X]}(u,v)=1$ and $\dist_{\overline{G'[X]}}(u,v)\leq 3$ due to $\diam(\overline{G'[X]})\leq 3$, hence also $\dist_{G''}(u,v)\leq 3$ because $G''[X]$ is equal to $G'[X]$ or its complement.
  Since~$G''$ was chosen arbitrarily as a $\PP$-flip of~$G$, this proves that $\dist_\PP(u,v) \le \diamBoundBip$ holds for any edge~$uv \in E(G')$.
  So we have more generally $\dist_{\PP}(u,v) \le \diamBoundBip\cdot \dist_{G'}(u,v)$ for every pair of vertices $u,v$,
  which in terms of balls is equivalent to $\Ball^{r}_{G'}(v) \subseteq \Ball^{6r}_{\PP}(v)$ for all vertices~$v$ and radius~$r$.

  Consider now the partition~$\PP'$ used to obtain the flip~$G'$.
  Recall that its part are of the form $\bigcap_{Y \in \PP} P_{X,Y}$
  for all~$|\PP|$ choices of~$X \in \PP$, and all~$2^{|\PP|}$ choices of $\{P_{X,Y} \in \PP_{X,Y}\}_{Y \in \PP}$.
  This immediately gives the general bound $|\PP'| \le |\PP| \cdot 2^{|\PP|}$.
  Assuming now that~$G$ has VC-dimension at most~$d$,
  we will improve this bound using the additional restriction on~$\PP_{X,Y}$ ensured by \cref{lem:informal-bipartite}:
  either~$\PP_{X,Y} = \{X\}$ is trivial, or there is some vertex~$v_{X,Y} \in Y$ such that $\PP_{X,Y} = \{X \cap N(v_Y), X \setminus N(v_Y)\}$ is the partition into neighbours and non-neighbours of~$v_Y$.
  Call~$A$ the collection of these vertices~$v_Y$.
  The above implies that~$\PP'$ partitions~$X$ into neighbourhood types over~$A$,
  that is parts of the form $\{x \in X : N(x) \cap A = B\}$ for all choices of~$B \subseteq A$.
  By the Sauer--Shelah lemma (see~\cref{sauershelah}), if~$G$ has VC-dimension at most~$d$,
  then at most $\Oof(|A|^d) = \Oof(|\PP|^d)$ of these neighbourhood types are non-empty.
  Summing over all choices of~$X$, this gives $|\PP'| = \Oof(|\PP|^{d+1})$.
  
  We finally compute the running time of this procedure. For each part $X \in \PP$, the algorithm checks if $\diam(G[X])\leq 3$ and either complements or keeps $G[X]$ as is; evidently, this can be achieved in time $\cal{O}(|\PP|\cdot |V(G)|^2)$. Additionally, for each distinct pair of parts $X,Y \in \PP$, the algorithm checks if $\diam(G[X,Y])\leq 6$ or $\diam(\overline{G[X,Y]})\leq 6$; if neither holds, then $G[X,Y]$ and $\overline{G[X,Y]}$ are necessarily both disconnected by \Cref{lem:diam-complement-bipartite}, and so the algorithm checks if $G[X,Y]$ is the disjoint union of two bicliques. This may be achieved in time $\cal{O}(|\PP|^2\cdot |V(G)|^2)$. In any one of these outcomes, the flips and the refinement of the partition are directly obtainable by \Cref{lem:informal-bipartite}.
\end{proof}

We remark that an inspection of the proof of \Cref{lem:inc-lemma} yields a polynomial-time algorithm for converting arbitrary flips into definable ones. The combination of this fact together with \Cref{lem:informal} gives an effective lemma for translating the partition metric into the metric induced by a definable flip, i.e., an effective version of \Cref{thm:conversion}.

\section{Flip-separability}\label{sec:flip-separability}

The starting point to our proof of separability is the recent characterization of monadic dependence via flip-breakability \cite{flip-breakability}, a combinatorial property that allows to find large sets that are pairwise far apart after a bounded number of flips. To make use of this property, we require some additional tools which we believe will be very useful in the analysis of monadically dependent classes. The first is a locality property of the partition metric.

\subsection{Locality for various metrics}
\label{sec:locality}


Evidently, for every graph $G$ and its $k$-flip $G'$, we may definably recover the structure of~$G$ in~$G'$ by expanding the language with $k$ unary predicates that account for the flip. As such, the metric induced by a concrete flip may be used in a variant of Gaifman's locality theorem~\cite{gaifman}. Interestingly, this idea also applies to any partition metric as these can be approximated by concrete flips due to \Cref{lem:informal}. This idea is crucially used in our proof of separability; we make this precise below.
Recall the following standard corollary of Gaifman's locality theorem \cite{gaifman}, see e.g. \cite{boundedLocalCliquewidth}. Here, by a {\em{$k$-colored graph}} we mean a graph with $k$ unary predicates (colors) on it, and the distance between tuples $\bar u$ and $\bar v$ is defined as the minimum distance between any vertex present in $\bar u$ and any vertex present in $\bar v$.

\begin{theorem}
  \label{thm:gaifman-std}
  For every $k \in \N$ and every first-order formula $\phi(\bar x, \bar y)$ in the language of $k$-colored graphs,
  there exist numbers $r = r(\qr(\phi))$ and $t = t(\qr(\phi),|\bar x|,|\bar y|, k)$ such that for every $k$-colored graph $G$,
  there are colorings $\col_1:V(G)^{\bar x}\to [t]$ and $\col_2:V(G)^{\bar y}\to [t]$ satisfying the property that for any two tuples $\bar u \in V(G)^{\bar x}, \bar v \in V(G)^{\bar y}$ with $\dist_G(\bar u, \bar v) > r$, whether $\phi(\bar u, \bar v)$ holds in $G$ depends only on $\col_1(\bar u)$ and~$\col_2(\bar v)$.
\end{theorem}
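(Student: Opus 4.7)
The plan is to apply Gaifman's locality theorem to decompose $\phi$ into local pieces, and then exploit the disjointness of the local neighborhoods around far-apart tuples via a Feferman--Vaught style argument in order to split the dependence into two independent local types.

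First, Gaifman's locality theorem yields an integer $r_0 = r_0(\qr(\phi))$ and a rewriting of $\phi(\bar x, \bar y)$ as a Boolean combination of basic local sentences $\chi_1, \ldots, \chi_M$ (whose truth in $G$ is a constant independent of $\bar x, \bar y$) together with $r_0$-local formulas $\psi_1^{(r_0)}(\bar x, \bar y), \ldots, \psi_N^{(r_0)}(\bar x, \bar y)$, each of quantifier rank at most some $q = q(\qr(\phi))$. I then set $r := 2 r_0 + 1$.

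Next, for any tuples $\bar u, \bar v$ with $\dist_G(\bar u, \bar v) > r$, the balls $\Ball_G^{r_0}(\bar u)$ and $\Ball_G^{r_0}(\bar v)$ are vertex-disjoint with no edges between them, so the substructure induced on their union with $\bar u \bar v$ distinguished is the disjoint union of the pointed structures $\mathbf{U} := (G[\Ball_G^{r_0}(\bar u)], \bar u)$ and $\mathbf{V} := (G[\Ball_G^{r_0}(\bar v)], \bar v)$. Since each $\psi_i^{(r_0)}$ only quantifies inside this union, its value on $(\bar u, \bar v)$ depends only on this disjoint union. The Feferman--Vaught lemma then gives that the value of any formula of quantifier rank at most $q$ on a disjoint union of two pointed structures is determined by the $q$-types of the individual components.

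Defining $\col_1(\bar u)$ to be the $q$-type of $\mathbf{U}$ and $\col_2(\bar v)$ analogously, the pair $(\col_1(\bar u), \col_2(\bar v))$, combined with the fixed truth values of $\chi_1, \ldots, \chi_M$ in $G$, determines $\phi(\bar u, \bar v)$. Since the signature is finite (the edge relation plus $k$ unary predicates) and the tuple arities are bounded by $|\bar x|$ and $|\bar y|$, the number of possible $q$-types is finite, yielding the bound $t = t(\qr(\phi), |\bar x|, |\bar y|, k)$. The only mildly technical ingredient is the Feferman--Vaught reduction: one shows by induction on formula structure that each $\psi_i^{(r_0)}$ restricted to disjoint unions is equivalent to a finite disjunction $\bigvee_j \alpha_{ij}(\bar u) \wedge \beta_{ij}(\bar v)$ of component formulas of quantifier rank at most $q$, but this is entirely classical and the main obstacle is merely bookkeeping to track that all parameters depend only on $\qr(\phi), |\bar x|, |\bar y|, k$, as claimed.
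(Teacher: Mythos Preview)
The paper does not give its own proof of this statement: it is stated as a standard corollary of Gaifman's locality theorem, with references to \cite{gaifman} and \cite{boundedLocalCliquewidth}, and then simply used as a black box. So there is no proof in the paper to compare against.

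Your sketch is the standard argument and is correct. Gaifman normal form reduces $\phi(\bar x,\bar y)$ to a Boolean combination of basic local sentences (whose truth value is constant over $G$) and $r_0$-local formulas in $\bar x\bar y$; when $\dist_G(\bar u,\bar v) > 2r_0+1$ the $r_0$-ball around $\bar u\bar v$ is the disjoint union of the two individual balls, so Feferman--Vaught compositionality reduces the truth of each local formula to the pair of local $q$-types, which serve as the colors. One small point worth stating explicitly: the truth values of the basic local sentences $\chi_j$ depend on $G$ but not on the tuples, so either absorb them into the colorings (since the colorings are chosen per $G$ anyway) or note that the determination function is allowed to depend on $G$; the theorem only claims that for each fixed $G$ the value of $\phi$ is a function of the two colors. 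With that caveat handled, the parameter bookkeeping you describe is exactly right: $r_0$ and $q$ depend only on $\qr(\phi)$, and the number of rank-$q$ types of pointed structures with $|\bar x|$ (respectively $|\bar y|$) constants over the signature with one binary and $k$ unary relations is a finite bound depending only on $\qr(\phi)$, $|\bar x|$, $|\bar y|$, and $k$.
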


We argue that a variant of the above is true if we replace $\dist_G(\bar u, \bar v)$ by $\dist_{\PP}(\bar u, \bar v)$.

\begin{lemma}\label{lem:partition-gaifman}
    For every $p \in\N$ and every first-order formula $\phi(\bar x, \bar y)$ in the language of graphs there exist numbers $\rho = \rho(\qr(\phi))$ and $\ell = \ell(\qr(\phi), |\bar x|, |\bar y|, p)$ such that for every graph~$G$ and every partition $\PP$ of $V(G)$ with at most $p$ parts, there are colorings $\col_1:V(G)^{\bar x} \to [\ell]$ and $\col_2: V(G)^{\bar y} \to [\ell]$ satisfying the property that for any  $\bar u \in V(G)^{\bar x}, \bar v \in V(G)^{\bar y}$ with $\dist_\PP(\bar u, \bar v) > \rho$, whether $\phi(\bar u, \bar v)$ holds in $G$ depends only on $\col_1(\bar u)$ and $\col_2(\bar v)$.
  \end{lemma}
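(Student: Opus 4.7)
The plan is to reduce to the classical Gaifman-style statement (\cref{thm:gaifman-std}) by using \cref{lem:informal} to approximate the partition metric $\dist_\PP$ by the standard graph metric of a single concrete flip of $G$.

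First, given $G$ and $\PP$ with $|\PP|\le p$, I would apply \cref{lem:informal} to obtain a refinement $\PP'$ of $\PP$ with $|\PP'|\le p\cdot 2^p$ together with a $\PP'$-flip $G'$ of $G$ such that $\Ball^r_{G'}(v)\subseteq \Ball^{6r}_\PP(v)$ for all $v\in V(G)$ and $r\in\N$. Viewing the parts of $\PP'$ as unary predicates turns $G'$ into a $k$-colored graph with $k\le p\cdot 2^p$, a quantity bounded in terms of $p$ alone (no VC-dimension assumption is used or needed here).

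Since $G$ is itself a $\PP'$-flip of $G'$, the adjacency relation of $G$ is definable in the colored $G'$ by a quantifier-free Boolean combination $\psi_E(x,y)$ of $E_{G'}(x,y)$ and the color predicates applied to $x$ and $y$: concretely, $E_G(x,y)$ is the XOR of $E_{G'}(x,y)$ with an indicator of whether the pair of parts containing $x$ and $y$ was flipped. Replacing every atom $E(z,w)$ in $\phi$ by $\psi_E(z,w)$ yields a formula $\phi'(\bar x,\bar y)$ in the language of $k$-colored graphs with $\qr(\phi')=\qr(\phi)$, and $\phi(\bar u,\bar v)$ holds in $G$ iff $\phi'(\bar u,\bar v)$ holds in the colored $G'$.

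Now I would apply \cref{thm:gaifman-std} to $\phi'$ on the $k$-colored graph $G'$ to obtain constants $r=r(\qr(\phi))$ and $\ell=t(\qr(\phi),|\bar x|,|\bar y|,k)$ together with colorings $\col_1\colon V(G)^{\bar x}\to[\ell]$ and $\col_2\colon V(G)^{\bar y}\to[\ell]$ such that whenever $\dist_{G'}(\bar u,\bar v)>r$, the truth of $\phi'(\bar u,\bar v)$ is determined by $\col_1(\bar u)$ and $\col_2(\bar v)$. Setting $\rho\coloneqq 6r$, the containment $\Ball^r_{G'}(v)\subseteq\Ball^{6r}_\PP(v)$ read contrapositively yields $\dist_\PP(u,v)>6r\Rightarrow\dist_{G'}(u,v)>r$, so $\dist_\PP(\bar u,\bar v)>\rho$ forces $\dist_{G'}(\bar u,\bar v)>r$ (taking the minimum over pairs of coordinates), transferring the conclusion back to $\dist_\PP$. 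I do not anticipate a substantive obstacle: the heavy lifting is done by \cref{lem:informal}, and the only points requiring minor care are that $|\PP'|$ is bounded by a function of $p$ and that translating $\phi$ through the quantifier-free $\psi_E$ preserves quantifier rank.
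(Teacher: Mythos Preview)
Your proposal is correct and follows essentially the same route as the paper's proof: apply \cref{lem:informal} to get a $\PP'$-flip $G'$ with $|\PP'|\le p\cdot 2^p$ approximating $\dist_\PP$, color $G'$ by the parts of $\PP'$, translate $\phi$ into a formula $\phi'$ of the same quantifier rank via a quantifier-free definition of $E_G$, apply \cref{thm:gaifman-std}, and set $\rho=6r$ to transfer back. The paper carries this out identically, including the same choice of $k=p\cdot 2^p$ and $\rho=6r$.
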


\begin{proof}
    Let $k:=p\cdot 2^p$, $r := r(\qr(\phi))$ and $t := t(\qr(\phi),|\bar x|,|\bar y|,k)$ from \Cref{thm:gaifman-std}, and set $\rho:=6r$, $\ell:=t$. Consider a graph $G$ and a partition $\PP$ of $G$ into $p$ parts. By \Cref{lem:informal} it follows that there is a $k$-flip $G'$ of $G$ such that 
      \[ \dist_{\PP}(u,v)\leq 6\cdot \dist_{G'}(u,v), \qquad \text{for all } u,v \in V(G).\tag{$\star$}\label{eq:star1}\]
    Consider an expansion $\hat{G'}$ of $G'$ with $k$ unary predicates, interpreted as the parts defining this flip. It follows that the edge relation of $G$ can be defined in $\hat{G'}$ by a quantifier-free formula, and consequently, we may rewrite $\phi(\bar x, \bar y)$ into a formula $\phi'(\bar x, \bar y)$ in the language of $k$-colored graphs and with the same quantifier rank as $\phi$, that satisfies 
    \[ G \models \phi(\bar u, \bar v) \iff \hat{G'}\models \phi'(\bar u,\bar v), \qquad \text{for all }\bar u \in V(G)^{\bar x},\bar v \in V(G)^{\bar y}.\tag{$\star\star$}\label{eq:star2}\]
    Applying \Cref{thm:gaifman-std}, we obtain colorings $\col_1:V(G)^{\bar x} \to [\ell]$ and in $\col_2:V(G)^{\bar y}\to [\ell]$ satisfying the property that for any $\bar u \in V(G)^{\bar x},\bar v \in V(G)^{\bar y}$ with $\dist_{G'}(\bar u,\bar v)>r$, whether $\phi'(\bar u,\bar v)$ holds in $\hat{G'}$ only depends on $\col_1(\bar u)$ and $\col_2(\bar v)$. It thus follows by (\ref{eq:star1}) and (\ref{eq:star2}) that for any tuples $\bar u \in V(G)^{\bar x},\bar v \in V(G)^{\bar y}$ with $\dist_{\PP}(\bar u,\bar v)>\rho$, whether $\phi(\bar u,\bar v)$ holds in $G$ only depends on $\col_1(\bar u)$ and $\col_2(\bar v)$, as required.
\end{proof}

\subsection{Flip-breakability}

We will rely on the notion of flip-breakability, introduced in~\cite{flip-breakability}.

\begin{definition}\label{def:flip-breakable}
    A graph class $\CC$ is \emph{flip-breakable} if for every radius $r\in \N$, there is a~$t \coloneqq t(\CC,r)$ and a function $M_r : \N \to \N$, such that the following holds.
    For every graph $G\in\CC$, $m\in\N$, and set $W \subseteq V(G)$ of size at~least~$M_r(m)$, there are disjoint subsets $A_1, A_2 \subseteq W$ each of size at~least~$m$ and a $t$-flip $H$ of $G$ such that $\Ball_H^r(A_1) \cap \Ball_H^r(A_2) = \emptyset$.
\end{definition}

\begin{theorem}[\cite{flip-breakability}]
    A graph class is monadically dependent if and only if it is flip-breakable.
\end{theorem}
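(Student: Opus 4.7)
The plan is to prove the two directions separately, treating the easy one first. For the forward direction, that flip-breakability implies monadic dependence, I would argue by contraposition using the transduction definition recalled in \cref{lem:vc-trans-mondep}. If $\CC$ is not monadically dependent, then it transduces every finite graph, and in particular transduces arbitrarily large bipartite random graphs or complete half-graphs; these are visibly not flip-breakable, since in such graphs no $k$-flip can separate two linear-size subsets (by pigeonhole, a typical pair of parts in any $k$-coloring still contains many edges on both sides). Using that a transduction, together with the bounded number of unary predicates needed to mark a flip partition, converts any $k$-flip of a graph $G\in\CC$ into a $k'$-flip of the interpreted graph for some $k'$ depending only on $k$ and the transduction, one pulls back the failure of flip-breakability from $\mathsf T(\CC)$ to $\CC$ itself.

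For the converse direction, that monadic dependence implies flip-breakability, I would combine Ramsey extraction with the flip-metric toolbox developed earlier in the excerpt. Fix $r$ and $m$, take $G \in \CC$ and a very large $W \subseteq V(G)$. The first step is to find inside $W$ a subset $W'$ of size $2m$ on which the quantifier-free type over any small parameter set is highly uniform; since monadic dependence implies bounded VC-dimension, the number of neighborhood types over any fixed parameter set is polynomial by Sauer--Shelah (\cref{sauershelah}), and one can extract such a $W'$ by an iterated Ramsey argument whose quantitative bound is well-controlled. Split $W'$ into two halves $A_1,A_2$ of size $m$; the task becomes producing a bounded flip $H$ of $G$ with $\Ball_H^r(A_1) \cap \Ball_H^r(A_2) = \emptyset$. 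I would look for $H$ as an $S$-definable flip for a bounded set $S$ chosen from $W'$, relying on \cref{lem:new-transfer} to convert freely between flip-metrics and concrete flips while controlling radii.

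The crux is showing that such a separating flip actually exists. I would argue by contradiction: if for \emph{every} bounded $S \subseteq W'$ there were a pair $(a_1,a_2)\in A_1\times A_2$ with $\dist_S(a_1,a_2)\le r$, then by the locality statement for the partition metric (\cref{lem:partition-gaifman}) the relation ``$a_1$ is $r$-close to $a_2$ in the metric $\dist_S$'' would be captured by a fixed first-order formula in the parameters~$S$. Combined with the indiscernibility of $W'$, this formula can be tuned, by varying $S$ within $W'$, to interpret every bipartite graph between two $m$-element sets via a single transduction applied to unboundedly large members of $\CC$ — the obstruction forbidden by monadic dependence, giving a contradiction for large enough $m$. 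The main technical obstacle is aligning the indiscernibility of $W'$ with the choice of flip parameters $S$: a naive Ramsey extraction only yields indiscernibility over the empty set, whereas the argument requires indiscernibility \emph{over $S$}, so I would expect to need a stretching or simultaneous Ramsey step that grows $S$ and $W'$ in tandem, analogous to extracting indiscernible sequences in Shelah-style dependence theory.
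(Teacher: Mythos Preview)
The paper does not prove this theorem at all: it is quoted verbatim from \cite{flip-breakability} and used as a black box (see also \cref{thm:flip-breakability}, the two-set definable-flip variant derived from it). There is therefore no ``paper's own proof'' to compare your proposal against; the result is external input to the present work, and the paper's contribution is rather to \emph{strengthen} it to flip-separability (\cref{thm:equivalence}) and, incidentally, to obtain improved polynomial margins in \cref{lem:sep-implies-break}.

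On the substance of your sketch: both directions have real gaps. For the contrapositive direction, your pull-back step is not sound as written. You argue that a $k$-flip of $G\in\CC$ becomes a $k'$-flip of the transduced graph, but transductions may drop vertices and redefine adjacency by an arbitrary formula, so a flip that separates sets in $G$ need not separate anything in $\mathsf T(G)$, and conversely a graph in $\mathsf T(\CC)$ that is hard to break does not immediately yield a hard-to-break $G\in\CC$. The actual argument in \cite{flip-breakability} does not go through transductions abstractly: it extracts explicit combinatorial obstructions (families of ``flipped crossings'' and related patterns) that lie \emph{inside} graphs of $\CC$ itself, and shows these patterns directly witness the failure of flip-breakability.

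For the hard direction, you correctly identify the crux but do not resolve it. In your contradiction argument the close pair $(a_1,a_2)$ varies with $S$, so you cannot interpret an arbitrary bipartite relation by ``tuning $S$'': different choices of $S$ may always certify closeness of the \emph{same} pair, which encodes nothing. The indiscernibility you extract over the empty set is indeed too weak, and the ``stretching'' step you allude to is exactly where all the work lies---in \cite{flip-breakability} this is handled by a lengthy construction of \emph{insulators} and prepatterns, not by a generic Ramsey/locality combination. Your invocation of \cref{lem:partition-gaifman} is also slightly off: that lemma gives locality \emph{relative to a fixed partition} $\PP$, whereas your argument varies $S$ (and hence $\PP_S$) while trying to keep the colorings fixed.
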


A natural two-set variant of this property also holds,
in which two large sets~$W_1,W_2$ both of size~$M_r(m)$ are given, and one finds the distant subsets~$A_1,A_2$ of size~$m$ with~$A_i \subseteq W_i$, see \cite[full version, Theorem~19.14]{flip-breakability}.
Combined with \cref{lem:inc-lemma} which allows to replace $t$-flips with $t$-definable flips,
we obtain the following statement which we use in the proof.
\begin{corollary}\label{thm:flip-breakability}
    For every monadically dependent graph class $\CC$ and radius $r\in \N$, there is a~$t := t(\CC,r)$ and a function $M_r : \N \to \N$, such that the following holds.
    For every graph $G\in\CC$, $m\in\N$, and sets $W_1,W_2 \subseteq V(G)$ each of size at~least~$M_r(m)$, there are disjoint subsets $A_1 \subseteq W_1, A_2 \subseteq W_2$ each of size at~least~$m$ and a~\mbox{$t$-definable} flip $H$ of $G$ such that $\Ball_H^r(A_1) \cap \Ball_H^r(A_2) = \emptyset$.
\end{corollary}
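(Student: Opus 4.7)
The plan is to combine the two-set version of flip-breakability from~\cite{flip-breakability} with the ``definable flip conversion'' provided by \cref{lem:inc-lemma}, losing only a constant factor in the radius.

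First, I would invoke \cite[full version, Thm.~19.14]{flip-breakability} with the \emph{inflated} radius $r' \coloneqq 5r$. This yields constants $t' \coloneqq t(\Cc,5r)$ and a function $M'_{5r}\colon \N \to \N$ such that, for any $G \in \Cc$ and any two sets $W_1,W_2 \subseteq V(G)$ of size at least $M'_{5r}(m)$, one can find disjoint $A_1 \subseteq W_1$, $A_2 \subseteq W_2$ of size at least $m$, and a (partition) $t'$-flip $H'$ of $G$ with $\Ball^{5r}_{H'}(A_1) \cap \Ball^{5r}_{H'}(A_2) = \emptyset$. I then set $M_r \coloneqq M'_{5r}$.

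Next, I would apply \cref{lem:inc-lemma} to the $t'$-flip $H'$. Since $\Cc$ is monadically dependent, it has bounded VC-dimension, say $d = d(\Cc)$; this is the only place where I use monadic dependence beyond invoking flip-breakability. \cref{lem:inc-lemma} then produces a set $S \subseteq V(G)$ of size $|S| \leq \mathcal{O}(d\cdot (t')^2)$ and an $S$-definable flip $H$ of $G$ satisfying
\[
    \Ball^r_H(v) \subseteq \Ball^{5r}_{H'}(v) \qquad \text{for every } v \in V(G).
\]
In particular, $\Ball^r_H(A_i) \subseteq \Ball^{5r}_{H'}(A_i)$ for $i=1,2$, so the two balls remain disjoint. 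Setting $t \coloneqq \mathcal{O}(d\cdot (t')^2)$, the flip $H$ is $t$-definable and witnesses the required property for the chosen $A_1,A_2$.

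There is essentially no obstacle here beyond bookkeeping: the two-set flip-breakability is stated as cited, \cref{lem:inc-lemma} is available, and monadic dependence supplies the bounded VC-dimension needed to apply it. The only subtlety is to remember that \cref{lem:inc-lemma} blows up distances by a factor of~$5$, which is why the radius parameter fed into flip-breakability must be $5r$ rather than~$r$; this in turn only affects the resulting constants $t = t(\Cc,r)$ and $M_r$, both of which are allowed to depend on $\Cc$ and $r$.
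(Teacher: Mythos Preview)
Your proposal is correct and follows essentially the same route as the paper: invoke the two-set variant of flip-breakability from~\cite{flip-breakability} and then apply \cref{lem:inc-lemma} (using bounded VC-dimension from monadic dependence) to replace the resulting partition flip by a definable one. Your explicit handling of the factor-$5$ radius blowup is exactly the bookkeeping the paper leaves implicit.
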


A simple application of flip-breakability is the following:
in a graph~$G$ from a monadically dependent class with weights~$\weight$, given a very large set of vertices~$W$,
there is a large subset~$A \subset W$ and a bounded set~$S$ satisfying~$\weight(\Ball_S^r(A)) \le \eps \cdot \weight(V(G))$.
Indeed, for~$p \coloneqq \ceil{\frac{1}{\eps}}$, by applying~$p-1$ times flip-breakability with radius~$2r$ to~$W$,
we obtain~$p$ large subsets $A_1,\dots,A_p$ of~$W$ pairwise at distance more than~$2r$ in some flip metric~$\dist_S$ with bounded~$|S|$.
Then, the $r$-balls~$\Ball_S^r(A_i)$ are all disjoint, and one of them must have only an $\eps$-proportion of the total weight.

The main technical result of this section (\cref{lem:small-balls} below) is a variant of this statement in which the sets~$W$ and~$S$ of vertices are replaced by a families~$\FF$ and~$\YY$ of $t$-tuples of vertices.
To state this result, we need to define the flip-metric associated with~$\YY$, which combines the flip metric~$\dist_S$ for all~$S \in \YY$.
For a graph $G$ and a family $\YY \subseteq 2^{V(G)}$, we define for all $u,v\in V(G)$, and $A,B \subseteq V(G)$,
\[
    \dist_\YY(u,v) := \max_{S \in \YY} \dist_S(u,v)
    \quad
    \text{and}
    \quad
    \dist_\YY(A,B) := \min_{a\in A, b\in B} \dist_\YY(a,b).
\]
Thus, we also have
\[
    \Ball_\YY^r(v) := \bigcap_{S \in \YY} \Ball_S^r(v)
    \quad
    \text{and}
    \quad
    \Ball_\YY^r(A) = \bigcup_{v\in A} \Ball_\YY^r(v).
\]

We call $\FF$ \emph{$t$-uniform} if every set in $\FF$ has size exactly~$t$. We will use the sunflower lemma:
\begin{theorem}[\cite{sunflower}]\label{lem:sunflower}
  For every $t \in \N$ there is a function $f:\N \to \N$ satisfying the following.
  For every $m \in \N$ and in any $t$-uniform family $\FF$ of size $f(m)$, there is a~subfamily $\FF' \subset \FF$ of size~$m$ and a set~$Y$ such that for all~$A \neq B$ in~$\FF'$, $A \cap B = Y$.
  The subfamily~$\FF'$ is called a sunflower, and~$Y$ its core.
\end{theorem}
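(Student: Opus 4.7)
The plan is to prove this by induction on $t$, reproducing the classical argument of Erd\H{o}s and Rado. For the base case $t=1$, any $1$-uniform family of size $m$ consists of $m$ distinct singletons, which trivially form a sunflower with empty core; so $f(m) \coloneqq m$ suffices.

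For the inductive step, assume the statement holds for $t-1$ with some function $g \colon \N \to \N$. Given a $t$-uniform family $\FF$, I would begin by extracting a \emph{maximal} subfamily $\mathcal{G} \subseteq \FF$ whose members are pairwise disjoint. If $|\mathcal{G}| \geq m$, then $\mathcal{G}$ itself is the desired sunflower, with empty core. Otherwise $|\mathcal{G}| < m$, so the union $U \coloneqq \bigcup \mathcal{G}$ has at most $t(m-1)$ elements; by maximality of $\mathcal{G}$, every set in $\FF$ meets $U$. A pigeonhole argument then yields an element $x \in U$ belonging to at least $|\FF| / (t(m-1))$ members of $\FF$.

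I would then pass to the $(t-1)$-uniform family $\FF_x \coloneqq \{\,A \setminus \{x\} \mid A \in \FF,\ x \in A\,\}$. Distinct sets $A \neq B$ in $\FF$ containing $x$ give rise to distinct sets $A \setminus \{x\} \neq B \setminus \{x\}$, so $|\FF_x|$ equals the number of members of $\FF$ through $x$. Setting $f(m) \coloneqq t(m-1) \cdot g(m) + 1$ guarantees $|\FF_x| \geq g(m)$, so by the inductive hypothesis $\FF_x$ contains a sunflower $\FF'_x$ of size $m$ with some core $Y'$. Re-adding $x$ to each element of $\FF'_x$ then produces a sunflower $\FF' \subseteq \FF$ of size $m$ with core $Y \coloneqq Y' \cup \{x\}$.

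Unfolding the recursion $f_t(m) = t(m-1)\,f_{t-1}(m) + 1$ with $f_1(m) = m$ yields the standard Erd\H{o}s--Rado bound $f(m) \leq t!\,(m-1)^t + O((m-1)^{t-1})$. The entire argument is essentially routine once the trick of extracting a maximal disjoint subfamily is in hand; the only point that requires care is verifying that removing the pivot $x$ does not collapse distinct members of $\FF$ together, which is immediate since $\FF$ is a set. There is thus no serious obstacle — the statement is a textbook result and the proof sketch above carries through directly.
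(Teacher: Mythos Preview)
Your proof is correct and is precisely the classical Erd\H{o}s--Rado argument. The paper, however, does not give a proof of this statement at all: it is quoted as a known result with a citation to the original sunflower paper, so there is no ``paper's own proof'' to compare against. Your write-up would serve perfectly well as a self-contained justification if one were desired.
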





We proceed with the main technical lemma of this section.

\begin{lemma}\label{lem:small-balls}
    For every monadically dependent graph class $\CC$,  radius $r \in \N$, and $\eps >0$, there are numbers $t := t(\CC,r)$ and $k := k(\CC, r, \eps)$ and a function $M := M(\CC,r,\eps) : \N \to \N$ satisfying the following.
    Given a graph $G\in\CC$ with weights $\weight$ and a $t$-uniform family $\FF \subseteq 2^{V(G)}$ of size $|\FF| \geq M(m)$,
    there exist two $t$-uniform families $\FF' \subseteq \FF$ and $\YY \subseteq 2^{V(G)}$ of size $|\FF'| \geq m$ and $|\YY| \leq k$,
    such that for each $S \in \FF'$, we have
    \[
        \weight(\Ball^r_\YY(S- \textstyle{\bigcup \YY})) \leq \eps \cdot \weight (V(G)).
    \]
\end{lemma}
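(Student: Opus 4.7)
The plan is to combine the sunflower lemma with a coordinate-wise iteration of flip-breakability. Fix $t := t(\CC, r)$ as in \cref{thm:flip-breakability}, set $\eps' := \eps/t$, $p := \ceil{1/\eps'}$, and observe first that an iterated version of flip-breakability follows by applying the two-set form of \cref{thm:flip-breakability} $p-1$ times: given a sufficiently large set $W \subseteq V(G)$, one obtains disjoint subsets $A_1, \dots, A_p \subseteq W$ each of a prescribed size $m_0$, together with a family $\YY_0 \subseteq 2^{V(G)}$ of $O(p)$ sets each of size at most $t$, such that $\Ball^r_{\YY_0}(A_i) \cap \Ball^r_{\YY_0}(A_j) = \emptyset$ for all $i \neq j$. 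Pairwise separation is preserved across the splitting tree since $\dist_{\YY_0} \geq \dist_{S}$ for every $S \in \YY_0$, so that balls in the combined metric are contained in balls in each individual $\dist_S$.

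I first apply \cref{lem:sunflower} to $\FF$ to obtain a large sunflower $\FF^* \subseteq \FF$ with core $Y_0$, $|Y_0| \leq t$, and pairwise disjoint petals of size $q := t - |Y_0|$; the degenerate case $q = 0$ yields $\FF^* = \{Y_0\}$, handled trivially. For each $S \in \FF^*$, arbitrarily order the petal as $\{v_1(S), \dots, v_q(S)\}$. Then I iteratively refine: setting $\FF^{(0)} := \FF^*$, at step $j \in [q]$ form $W_j := \{v_j(S) : S \in \FF^{(j-1)}\}$, which has cardinality $|\FF^{(j-1)}|$ by disjointness of petals. Apply iterated flip-breakability to $W_j$ to obtain disjoint $W_j^1, \dots, W_j^p \subseteq W_j$ of prescribed size and a family $\YY_j$ of size $O(p)$ such that the balls $\Ball^r_{\YY_j}(W_j^i)$ are pairwise disjoint; by pigeonhole, some $W_j^{i_j}$ satisfies $\weight(\Ball^r_{\YY_j}(W_j^{i_j})) \leq \weight(V(G))/p \leq \eps' \cdot \weight(V(G))$. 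Restrict to $\FF^{(j)} := \{S \in \FF^{(j-1)} : v_j(S) \in W_j^{i_j}\}$ and absorb $\YY_j$. Finally, set $\FF' := \FF^{(q)}$ and $\YY := \{Y_0'\} \cup \bigcup_{j=1}^q \YY_j$, where $Y_0'$ is $Y_0$ padded to size $t$; pad every other set in $\YY$ to size $t$ as well, which only refines the flip metric and hence shrinks all balls, preserving every prior bound.

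For verification, fix $S \in \FF'$. Since $Y_0 \subseteq Y_0' \subseteq \bigcup \YY$, we have $S \setminus \bigcup \YY \subseteq S \setminus Y_0 = \{v_1(S), \dots, v_q(S)\}$, and since $\YY \supseteq \YY_j$ gives $\Ball^r_\YY(v) \subseteq \Ball^r_{\YY_j}(v)$ pointwise,
\[
\Ball^r_\YY\bigl(S \setminus \bigcup \YY\bigr) \;\subseteq\; \bigcup_{j=1}^q \Ball^r_{\YY_j}(v_j(S)) \;\subseteq\; \bigcup_{j=1}^q \Ball^r_{\YY_j}(W_j^{i_j}),
\]
using $v_j(S) \in W_j^{i_j}$ for $S \in \FF'$. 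Summing weights yields at most $q \cdot \eps' \cdot \weight(V(G)) \leq \eps \cdot \weight(V(G))$, as required. For the parameter bounds, $|\YY| \leq 1 + q \cdot O(p) = O(t^2/\eps) =: k$, and $M$ is obtained by unwinding: the input size required at step $j$ is the iterated flip-breakability function applied $q - j + 1$ more times, all precomposed with the sunflower function. The main anticipated obstacle was that each $S$ has multiple vertices, so the single-vertex sketch from the introduction does not directly apply; the coordinate-wise iteration, at the cost of a factor $q \leq t$ absorbed by $\eps' = \eps/t$, is precisely the mechanism that resolves this.
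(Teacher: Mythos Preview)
Your proof is correct, but the route differs from the paper's. After the sunflower step (which both proofs share), the paper partitions the petals into $p = \lceil 1/\eps \rceil$ groups $\FF_1,\dots,\FF_p$ up front and then, for every pair $(q,q')$ of groups and every pair $(i,j)$ of coordinates, applies two-set flip-breakability once to separate the $i$th coordinates of $\FF_q$ from the $j$th coordinates of $\FF_{q'}$. After $\binom{p}{2}t'^2$ applications, the whole sets $\bigcup\FF_q$ are pairwise far in the combined metric, so one entire group has a light $r$-ball and is returned as $\FF'$. You instead process the coordinates sequentially: for each coordinate $j$ you split the $j$th vertices into $p$ distant pieces, pick the lightest, and restrict $\FF$ accordingly, paying for the final union over $q\le t$ coordinates by working with $\eps' = \eps/t$. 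Both arguments are clean; yours avoids the double loop over pairs of groups and pairs of coordinates and in fact yields somewhat better parameters ($|\YY| = O(t^2/\eps)$ and roughly $t(p-1)$ compositions of $M_r$, versus the paper's $O(t^2/\eps^2)$ and $\binom{p}{2}t^2$ compositions). The paper's version has the minor aesthetic advantage that the final bound on $\weight(\Ball^r_\YY(S\setminus\bigcup\YY))$ comes from containment in a single light ball rather than from a union bound, so no $\eps/t$ rescaling is needed.
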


The proof follows the previously sketched argument, replacing vertices with $t$-tuples.
We repetitively apply flip-breakability to~$\FF$, considering each coordinate of $t$-tuples one after the other,
until~$\FF$ is broken into~$\ceil{\frac{1}{\eps}}$ subsets with pairwise disjoint $r$-neighbourhoods.
One of these subsets has a $r$-neighbourhood with only an $\eps$-fraction of the total weight, and satisfies the desired condition.
The sunflower lemma is used as a preprocessing step to reduce the problem to the case where tuples in~$\FF$ are pairwise disjoint.
\begin{proof}[Proof of \cref{lem:small-balls}]
    Fix~$\CC,r,\eps$ as in the statement, choose~$t := t(\CC,r)$ as given by flip-breakability (\cref{thm:flip-breakability}),
    and fix $p \coloneqq \ceil{\frac{1}{\eps}}$, $\ell \coloneqq \binom{p}{2} t^2$, and $k \coloneqq k(\CC,r,\eps) = \ell + 1$.
    Further, call~$M_{\text{brk}} : \N \to \N$ the function given by \cref{thm:flip-breakability} (depending on~$\CC,r$),
    and define \[M(m) \coloneqq f(p \cdot M_{\text{brk}}^{(\ell)}(m)),\] where~$f$ is the bound in the sunflower lemma (\cref{lem:sunflower}), and~$M_{\text{brk}}^{(\ell)}$ is the $\ell$-fold composition of~$M_{\text{brk}}$.

    Consider now~$G \in \CC$ with weights~$\weight$ and a $t$-uniform family~$\FF$ of size~$M(m)$.
    Applying \cref{lem:sunflower} to~$\FF$, we obtain a subfamily $\SS \subset \FF$ of size~$p \cdot M_{\text{brk}}^{(\ell)}(m)$ and a \emph{core}~$Y$ such that for all~$A \neq B \in \SS$, $A \cap B = Y$.
    Define $\SS' = \{S \setminus Y~\colon~S \in \SS\}$ the collection of \emph{petals} of the sunflower~$\SS$.
    By construction, it is $t'$-uniform for $t' := t - |Y|$, and no two sets in~$\SS'$ share a vertex.
    We arbitrarily partition~$\SS'$ into~$p$ subfamilies $\FF_1,\dots,\FF_p$, each of equal size~$M_{\text{brk}}^{(\ell)}(m)$.

    We now write each set in each~$\FF_q$ as a $t'$-tuple $\bar{s} = (s_1,\dots,s_{t'})$.
    Our goal, using \cref{thm:flip-breakability}, is to restrict each~$\FF_q$ to some subfamily of size~$m$,
    so that~$\bigcup \FF_q$ and~$\bigcup \FF_{q'}$ are far apart in some appropriate flip metric for~$q \neq q'$.
    Note here that we allow ourselves to discard entire tuples from the family~$\FF_q$, but we cannot discard individual vertices from a~kept tuple $\bar{s} \in \FF_q$.

    We proceed as follows for every~$q < q'$ and every $i,j \in [t']$:
    define $W_1 = \{s_i \colon \bar{s} \in \FF_q\}$ the set of $i$th vertices in tuples of~$\FF_q$, and similarly $W_2 = \{s_j \colon \bar{s} \in \FF_{q'}\}$.
    Since the sunflower lemma ensures that no two tuples of~$\FF_q$ share a vertex,
    we have $|W_1| = |\FF_q|$ and $|W_2| = |\FF_{q'}|$, which at the start of the procedure are equal to $M_{\text{brk}}^{(\ell)}(m)$.
    Flip-breakability (\cref{thm:flip-breakability}) applied to $W_1, W_2$ then gives subsets $A_1 \subset W_1$, $A_2 \subset W_2$ of size~$M_{\text{brk}}^{(\ell-1)}(m)$ each,
    and a set~$Y_1$ of size~$t$ such that $\Ball_{Y_1}^r(A_1) \cap \Ball_{Y_1}^r(A_2) = \emptyset$, or equivalently $\dist_{Y_1}(A_1,A_2) > 2r$.
    We then restrict~$\FF_q$ to the~$M_{\text{brk}}^{(\ell-1)}(m)$ tuples whose~$i$th element belongs to~$A_1$, and $\FF_{q'}$ to the~$M_{\text{brk}}^{(\ell-1)}(m)$ tuples whose~$j$th element belongs to~$A_2$.
    For simplicity, we also restrict every remaining~$\FF_{q''}$, $q'' \not\in \{q,q'\}$ to an~arbitrary subfamily of size~$M_{\text{brk}}^{(\ell-1)}(m)$.
    We then continue with the next choice of~$q,q',i,j$, yielding the next set $Y_2$ of size $t$, and further restricting the families~$\FF_s$ to size~$M_{\text{brk}}^{(\ell-2)}(m)$.

    After $\ell = \binom{p}{2} t^2$ steps, $m$ tuples remain in each~$\FF_q$, and we have accumulated flip-defining sets $Y_1,\dots,Y_\ell$, each of size~$t$.
    Adding the sunflower core, define $\YY = \{Y,Y_1,\dots,Y_\ell\}$
    (here~$|Y| < t$, but we can add arbitrary elements to~$Y$ to make~$\YY$ $t$-uniform: this only helps to reach the conclusion of the lemma).
    By construction, for all $q \neq q'$ and all~$i,j \in [t']$, the~$i$th elements of tuples of~$\FF_q$ are at distance more than~$2r$ from the~$j$th elements of tuples of~$\FF_{q'}$ in the flip metric~$\dist_{Y_{\ell'}}$ for some~$\ell' \in [\ell]$.
    A fortiori, the same holds in the metric~$\dist_\YY$.
    Therefore, $\dist_\YY\left(\bigcup \FF_q, \bigcup \FF_{q'}\right) > 2r$ for all~$q \neq q'$,
    or equivalently $\Ball_\YY^r(\bigcup \FF_q)$ and $\Ball_\YY^r(\bigcup \FF_{q'})$ are disjoint.
    Since these $p = \ceil{\frac{1}{\eps}}$ balls are disjoint, the one with minimum weight, say around~$\FF_q$, will satisfy
    \begin{equation}
        \weight(\Ball_\YY^r(\textstyle{\bigcup \FF_q})) \leq \eps \cdot \weight(V(G)).
        \label{eq:tuples-weight-bound}
    \end{equation}
    Finally, we construct~$\FF'$ by adding back the core~$Y$ to each tuple of~$\FF_q$, so that they are once again $t$-tuples.
    Since~$Y \in \YY$, we clearly have for any~$S \in \FF'$ that $S - \bigcup \YY \subseteq \bigcup \FF_q$,
    and thus the conclusion follows directly from \eqref{eq:tuples-weight-bound}.
\end{proof}

\subsection{Monadic dependence implies flip-separability}
We are now ready to prove our main result:
any monadically dependent (or equivalently flip-breakable) class~$\CC$ is flip-separable.
The core of the proof is the following lemma, in which flip-separability is modified to use the metric~$\dist_\FF$ for a $t$-uniform family~$\FF$, as defined in the previous section.
For simplicity, we assume that no vertex has large weight:
a~weight function $\weight \colon V(G) \to \R_{\geq 0}$ of a~graph $G$ is called \emph{$\eps$-balanced} if no vertex $v \in V(G)$ has weight larger than $\eps \cdot \weight(V(G))$.
Vertices with large weight will be added back afterwards.

\begin{lemma}\label{lem:seplargeweight}
    For every monadically dependent graph class $\CC$, $r \in \N$, and $\eps > 0$, there are $t,k \in \N$ with the following property.
    For every $G \in \CC$ with an $\eps$-balanced weight function $\weight$, there exists a
    family $\FF \subseteq 2^{V(G)}$ of  at most $k$ many sets each of size at most~$t$, such that for every vertex $v \in V(G)$, we have
    \[\weight(\Ball_\FF^r(v)) \leq \eps \cdot \weight(V(G)).\]
\end{lemma}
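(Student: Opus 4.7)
The plan is to construct the family $\FF$ by iteratively reducing a trivially large sparsifying family, using \cref{lem:small-balls} as the main combinatorial engine and Gaifman-style locality (via \cref{lem:partition-gaifman}) to justify each reduction. Set $t \coloneqq t(\CC, r)$, the constant given by flip-breakability.

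For initialization, pick for each $v \in V(G)$ a $t$-element subset $S_v \subseteq V(G)$ containing $v$, and let $\FF_0 \coloneqq \{S_v : v \in V(G)\}$. Since $v \in S_v$, the singleton $\{v\}$ is a part of the partition $\PP_{S_v}$, so a suitable $\PP_{S_v}$-flip isolates $v$; hence $\dist_{S_v}(v, u) = \infty$ for every $u \neq v$, which implies $\Ball^r_{\FF_0}(v) = \{v\}$. By $\eps$-balancedness of $\weight$, $\FF_0$ is $\eps$-sparsifying.

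In the iterative step, given a $t$-uniform $\eps$-sparsifying $\FF_i$ with $|\FF_i| > k$, apply \cref{lem:small-balls} with the inflated radius $r^* \coloneqq 2r + \rho$, where $\rho$ is the Gaifman locality parameter produced by \cref{lem:partition-gaifman} for the formula $\psi(v, u, \bar s) \coloneqq \dist_{\bar s}(v, u) > r$; this formula has bounded quantifier rank in terms of $r$ and $t$. This yields a subfamily $\FF'_i \subseteq \FF_i$ of size $m$ and a bounded $\YY_i$ such that $\weight(\Ball^{r^*}_{\YY_i}(S \setminus \bigcup \YY_i)) \leq \eps \cdot \weight(V(G))$ for each $S \in \FF'_i$. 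Taking a partition $\PP$ refining all $\PP_T$ for $T \in \YY_i$, apply \cref{lem:partition-gaifman} to $\psi$, and let $\XX_i \subseteq \FF'_i$ contain one representative per Gaifman coloring class; so $|\XX_i|$ is bounded. Define
\[\FF_{i+1} \coloneqq (\FF_i \setminus \FF'_i) \cup \XX_i \cup \YY_i \cup \{S_u : u \in \textstyle{\bigcup \YY_i}\}.\]
Choosing $m$ larger than the number of added sets ensures $|\FF_{i+1}| < |\FF_i|$, so the iteration terminates with $|\FF|$ bounded by the threshold of \cref{lem:small-balls} plus a constant.

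The technical core is verifying that $\FF_{i+1}$ remains $\eps$-sparsifying, which splits into three cases for each $v \in V(G)$. If $v \in \bigcup \YY_i$, then $S_v \in \FF_{i+1}$ forces $\Ball^r_{\FF_{i+1}}(v) = \{v\}$. If $v \notin \bigcup \YY_i$ but $\dist_{\YY_i}(v, s) \leq \rho$ for some $s \in S \setminus \bigcup \YY_i$ with $S \in \FF'_i$, the triangle inequality gives $\Ball^r_{\FF_{i+1}}(v) \subseteq \Ball^r_{\YY_i}(v) \subseteq \Ball^{r^*}_{\YY_i}(S \setminus \bigcup \YY_i)$, whose weight is bounded by \cref{lem:small-balls}. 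The main obstacle is the final case, where $\dist_{\YY_i}(v, \bigcup \FF'_i \setminus \bigcup \YY_i) > \rho$: we must show $\Ball^r_{\FF_{i+1}}(v) \subseteq \Ball^r_{\FF_i}(v)$, which has small weight by the inductive hypothesis. Suppose towards contradiction that some $u$ violates this inclusion; then a removed $S^* \in \FF'_i \setminus \XX_i$ witnesses $\dist_{S^*}(v, u) > r$ while no $S \in \XX_i$ does. Since $u \in \Ball^r_{\YY_i}(v)$ and $v$ is $\dist_{\YY_i}$-far from $S^*$, the triangle inequality makes both $v$ and $u$ far from $S^*$ in $\dist_\PP$ (for $\rho$ chosen large enough). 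Thus by \cref{lem:partition-gaifman}, the truth of $\psi(v, u, S^*)$ depends only on the Gaifman color of $S^*$, so the same-color representative $S' \in \XX_i$ also satisfies $\dist_{S'}(v, u) > r$, contradicting $u \in \Ball^r_{\FF_{i+1}}(v)$.
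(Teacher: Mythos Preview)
Your approach coincides with the paper's: initialize with the trivially sparsifying family $\{S_v\}$, then iteratively shrink by applying \cref{lem:small-balls} at an inflated radius and using \cref{lem:partition-gaifman} to replace most of the resulting $\FF'$ by colour representatives, with the same close/far case split. Two small parameter fixes are needed. First, $t$ must be the value \cref{lem:small-balls} produces for radius $r^*$ (i.e.\ $t(\CC,r^*)$), not $t(\CC,r)$, since otherwise $\FF_i$ has the wrong uniformity to feed into that lemma; there is no circularity because $\qr(\psi)\le r$ makes $\rho$, and hence $r^*$, depend only on $r$. Second, the close/far threshold should be $\rho+r$ rather than $\rho$: in case~3 you need $\dist_{\YY_i}(u,S^*)>\rho$ after losing $\dist_{\YY_i}(u,v)\le r$ to the triangle inequality, and ``$\rho$ chosen large enough'' is not a valid fix since $\rho$ is dictated by Gaifman --- raise the case-split threshold instead (with threshold $\rho+r$ your choice $r^*=2r+\rho$ then makes case~2 work exactly). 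The paper does the analogous thing with threshold $2r'$ and radius $3r'$.
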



We will call a $t$-uniform family $\FF \subseteq 2^{V(G)}$ a \emph{sparsifying family} if it satisfies the conclusion of \cref{lem:seplargeweight}, i.e.\ for every vertex $v \in V(G)$,
\[\weight(\Ball_\FF^r(v)) \leq \eps \cdot \weight(V(G)).\]

The proof of \cref{lem:seplargeweight} follows the sketch given at the end of \cref{sec:intro}.
The goal is to find a small sparsifying family~$\FF$.
We start with a very large~$\FF$: for instance any~$\FF$ satisfying $\bigcup \FF = V(G)$ is trivially sparsifying.
As long as~$\FF$ is very large, we shrink it as follows.
Applying \cref{lem:small-balls} gives a large subfamily~$\FF' \subset \FF$ and a $t$-uniform family~$\YY$ of bounded size which sparsifies~$\FF'$.
This ensures that any vertex close to~$\FF'$ is sparsified by~$\YY$ itself.
For vertices which are far from~$\FF'$, we use locality (\cref{lem:partition-gaifman}) to show that a bounded subfamily $\overline \XX \subset \FF'$ is enough to replicate the sparsifying power of~$\FF'$.
Then, tuples in $\XX \coloneqq \FF' \setminus \overline \XX$ are redundant, and $(\FF - \XX) \cup \YY$ is a sparsifying family.
Since~$|\FF'|$ is unbounded (function of~$|\FF|$) and~$|\overline \XX|,|\YY|$ are bounded, this new sparsifying family is smaller than~$\FF$.

\begin{proof}[Proof of \cref{lem:seplargeweight}]
Fix $\CC$, $r$, $\eps$ as in the statement.
Let $r' > r$  be the distance given by \cref{lem:partition-gaifman} for formulas with quantifier rank at most $r$.
Let $t \coloneqq t(\CC,3r',\eps)$, $\lambda \coloneqq k(\CC,3r',\eps)$, and $M \coloneqq M(\CC,r,\eps)$ be defined as in~\cref{lem:small-balls}.
All defined quantities so far depend only on $\CC$, $r$, $\eps$.

Fix a graph $G \in \CC$ with $\eps$-balanced weights $\weight$.
We may assume that~$G$ has at least~$t$ vertices, as otherwise $\eps$-balancedness lets us conclude with $\FF \coloneqq \{V(G)\}$.
Since the weights~$\weight$ are $\eps$-balanced, there exists a (large) sparsifying family $\FF \coloneqq \{ S_v~\colon~v \in V(G) \}$ where we can choose $S_v$ to be any set of size $t$ that contains~$v$.
We will show that there is a $k\in\N$, which only depends on $\CC,r,\eps$, such that whenever we have a sparsifying family $\FF$ of size at least $k$, we can also find a sparsifying family $\FF^*$ of strictly smaller size $|\FF^*| < |\FF|$.
This will prove the lemma by induction.

We will specify the value of $k$ later, and now assume that we are given a sparsifying family~$\FF$ of size at least $k$ that we want to compress.
We first apply \cref{lem:small-balls} with radius~$3r'$ and~$\eps$ to $\FF$ which yields $t$-uniform families $\FF' \subseteq \FF$ and $\YY$ with $|\FF'| \geq M^{-1}(k)$ and $|\YY| \leq \lambda$ such that for each $S \in \FF'$,
\begin{equation*}\label{eq:small-balls}
    \weight(\Ball^{3r'}_\YY(S- \textstyle{\bigcup \YY} )) \leq \eps \cdot \weight (V(G)).\tag{$*$}
\end{equation*}
We will next show the existence of a family $\XX \subseteq \FF'$ such that
\[
    \FF^* := (\FF - \XX) \cup \YY
\]
is a sparsifying family that is smaller than $\FF$.
We first show that vertices $v$ \emph{close to $\FF'$} already define balls of small-enough weight, regardless of~$\XX$.
This is made formal with the following claim.
\begin{claim}\label{clm:balls-close-to-S}
    For every $v \in \Ball_\YY^{2r'}(\bigcup \FF')$ we have $\weight(\Ball_\YY^{r}(v)) \leq \eps \cdot \weight (V(G))$.
\end{claim}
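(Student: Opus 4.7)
The plan is to derive the claim essentially directly from inequality~($*$) by a short triangle-inequality argument, after carefully handling a minor edge case concerning vertices that lie inside $\bigcup \YY$.

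The core idea is as follows. Suppose we have located some $u \in \bigcup \FF'$ with $\dist_\YY(v, u) \leq 2r'$ and, crucially, $u \notin \bigcup \YY$. Pick any $S \in \FF'$ with $u \in S$; then $u \in S - \bigcup \YY$. For every $w \in \Ball_\YY^r(v)$, the triangle inequality gives $\dist_\YY(w, u) \leq r + 2r' \leq 3r'$, where we use $r' > r$ in the last step. Consequently, $\Ball_\YY^r(v) \subseteq \Ball_\YY^{3r'}(S - \bigcup \YY)$, and $(*)$ applied to $S$ immediately yields $\weight(\Ball_\YY^r(v)) \leq \eps \cdot \weight(V(G))$.

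The only subtlety is guaranteeing the existence of such a witness $u$ outside $\bigcup \YY$, since the hypothesis $v \in \Ball_\YY^{2r'}(\bigcup \FF')$ only supplies some $u \in \bigcup \FF'$ with $\dist_\YY(v, u) \leq 2r'$, and a priori this $u$ could fall inside $\bigcup \YY$. The key observation here is that any $y \in \bigcup \YY$ is at infinite $\YY$-distance from every other vertex. Indeed, if $y \in Y \in \YY$, then $\{y\}$ is a singleton part of $\cal P_Y$, and by construction of $\cal P_Y$ every other part is uniformly adjacent or uniformly non-adjacent to $y$; hence a suitable $Y$-definable flip isolates $y$ entirely, so that $\dist_Y(y,w) = \infty$ for every $w \neq y$, and a fortiori $\dist_\YY(y,w) = \infty$.

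Using this observation, I would split into two cases. If $v \in \bigcup \YY$, then $\Ball_\YY^r(v) = \{v\}$ and the required bound is immediate from the $\eps$-balancedness of $\weight$. Otherwise $v \notin \bigcup \YY$, and any $u \in \bigcup \FF'$ with $\dist_\YY(v, u) \leq 2r' < \infty$ must also avoid $\bigcup \YY$. This produces the witness used in the first paragraph and completes the argument. I do not expect any serious obstacle: the whole claim is essentially a bookkeeping exercise around $(*)$, and the potential pitfall of vertices in $\bigcup \YY$ is neutralised by the isolation property of singleton parts in definable flips.
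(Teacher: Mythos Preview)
Your proposal is correct and follows essentially the same approach as the paper: both arguments observe that every $y \in \bigcup \YY$ is isolated in the $\YY$-metric (so the edge case reduces to $\eps$-balancedness), and otherwise apply the triangle inequality to embed $\Ball_\YY^r(v)$ into $\Ball_\YY^{3r'}(S - \bigcup \YY)$ and invoke $(*)$. The only cosmetic difference is that the paper fixes $S \in \FF'$ first and then splits on whether the witness lies in $\bigcup\YY$, while you fix the witness vertex $u$ first and then choose $S$ containing it.
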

\begin{claimproof}
    Consider a vertex~$v$ as in the claim, i.e.\ $v \in \Ball_\YY^{2r'}(S)$ for some $S \in \FF'$.
    By definition, every $y \in \bigcup\YY$ satisfies $\Ball^{2r'}_\YY(y) = \Ball^{r}_\YY(y) = \{y\}$.
    This means if $v \in \Ball^{2r'}_\YY(\bigcup \YY)$ then actually $v \in \bigcup\YY$ and $\Ball^r_\YY (v) = \{ v \}$ has small weight by $\eps$-balancedness. 
    Otherwise, we must have $v \in \Ball^{2r'}_\YY(S - \bigcup \YY)$.
    Then by the triangle inequality
    \[
        \Ball^{r}_\YY(v) \subseteq \Ball^{r'}_\YY(v) \subseteq \Ball^{3r'}_\YY(S- \textstyle{\bigcup \YY})
    \]
    and $\Ball_\YY^{r}(v)$ has small weight by \eqref{eq:small-balls}.
\end{claimproof}
We will next use the locality of first order logic, to show that we can handle the vertices \emph{far from $\FF'$}, by only keeping few representatives from $\FF'$.
We define the family $\XX$ containing the ``redundant'' sets from $\FF'$.
In order to build $\XX$,
consider the first-order formula  $\phi(vw, \bar s)$ with quantifier rank at most $r$ that asserts $\dist_S(v,w) > r$ for every two vertices $v,w \in V(G)$, set~$S$ of size $t$, and $t$-tuple $\bar s$ that enumerates $S$.
Moreover, let $\PP$ be the coarsest simultaneous refinement of every partition $\PP_S$ with $S \in \YY$.
Each $\PP_S$ has size at most~$(t+2^t)$, hence the size of $\PP$ is bounded by $p \coloneqq (t+2^t)^\lambda$,
and $\dist_\PP(u,v) \geq \dist_\YY(u,v)$ holds for all vertices $u,v \in V(G)$.

Applying \cref{lem:partition-gaifman} to $\phi$ and $p$ yields a number $\ell \coloneqq \ell(r,2,t,p)$ that only depends on $\CC,r,\eps$, and two $\ell$-colorings $\col_1 : V(G)^2 \rightarrow [\ell]$ and $\col_2 : V(G)^t \rightarrow [\ell]$ of the $2$-tuples and $t$-tuples of $V(G)$ such that for all $uv \in V(G)^2$ and $\bar s \in V(G)^t$ with $\dist_\PP(uv, \bar s)  > r'$, whether or not $G\models \phi(uv,\bar s)$ holds only depends on $\col_1(uv)$ and $\col_2(\bar s)$.
By our choice of $\phi$, whether or not $\dist_S(u,v) > r$ holds for $S := \{ s : s\in \bar s \}$ depends only on these colors, too.
Since $\dist_\PP(uv, \bar s) \geq \dist_\YY(uv, \bar s)$, the above in particular holds when $\dist_\YY(uv, \bar s) > r'$.

Let $\overline \XX \subseteq \FF'$ be a subfamily constructed by picking for every color $K \in [\ell]$ a set $S \in \FF'$ such that $\col_2(\bar s) = K$ holds for some enumeration $\bar s$ of $S$ (when such a set~$S$ exists for~$K$).
We set $\XX := \FF' - \overline{\XX}$, which completes the definition of $\FF^*$.
Let us now show that $\FF^*$ is a sparsifying family.
We have already argued in \cref{clm:balls-close-to-S} that vertices close to some set $S \subseteq \FF'$ have balls of sufficiently small weight.
It remains to argue that the same holds for vertices far from~$\FF'$.

\begin{claim}\label{clm:balls-far-from-S}
    For every $v \in V(G)$ with $v \notin \Ball^{2r'}_\YY(\bigcup \FF')$, we have $\Ball^r_{\FF^*}(v) \subseteq \Ball^r_{\FF}(v)$.
\end{claim}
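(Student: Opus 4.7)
\textbf{Proof plan for Claim \ref{clm:balls-far-from-S}.}
Unpacking definitions, $\Ball^r_{\FF^*}(v) = \Ball^r_\YY(v) \cap \bigcap_{S \in \FF - \XX} \Ball^r_S(v)$, while $\Ball^r_\FF(v) = \bigcap_{S \in \FF} \Ball^r_S(v) = \Ball^r_{\FF^*}(v) \cap \bigcap_{S \in \XX} \Ball^r_S(v)$. So to prove the inclusion it suffices to show that for any fixed $u \in \Ball^r_{\FF^*}(v)$ and any $S \in \XX$, we have $\dist_S(u,v) \le r$. This is where the representatives $\overline\XX$ and the locality statement come in: the formula $\phi(vw,\bar s)$ of quantifier rank at most $r$ expresses exactly the statement $\dist_S(v,w) > r$, so the goal is to argue that $\phi(uv, \bar s)$ must be false for every enumeration $\bar s$ of every $S \in \XX$.

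The hypothesis $v \notin \Ball^{2r'}_\YY(\bigcup \FF')$ is used to put all relevant configurations into the ``far'' regime of locality. For any $S \in \FF' \supseteq \XX \cup \overline\XX$ and any enumeration $\bar s$ of $S$, every coordinate of $\bar s$ lies in $\bigcup\FF'$ and is therefore at $\YY$-distance strictly greater than $2r'$ from $v$. Since $u \in \Ball^r_\YY(v)$, the triangle inequality for $\dist_\YY$ gives $\dist_\YY(u, \bar s) > 2r' - r \ge r'$ (using $r' > r$), and the same bound trivially holds for $v$. Hence $\dist_\YY(uv, \bar s) > r'$, and because $\PP$ is defined as a common refinement of all $\PP_S$ with $S \in \YY$, we have $\dist_\PP(uv, \bar s) \ge \dist_\YY(uv, \bar s) > r'$ as well, putting us in the regime of Lemma \ref{lem:partition-gaifman}.

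Now fix $S \in \XX$ with enumeration $\bar s$. By construction, $\overline\XX$ contains a set $S'$ admitting an enumeration $\bar s'$ with $\col_2(\bar s') = \col_2(\bar s)$. Since $S' \in \overline\XX \subseteq \FF - \XX \subseteq \FF^*$ and $u \in \Ball^r_{\FF^*}(v)$, we have $\dist_{S'}(u,v) \le r$, so $\phi(uv, \bar s')$ is false in $G$. By the previous paragraph, both $\dist_\PP(uv, \bar s)$ and $\dist_\PP(uv, \bar s')$ exceed $r'$, so the locality lemma (applied to $\phi$) implies that the truth values of $\phi(uv, \bar s)$ and $\phi(uv, \bar s')$ are determined by $\col_1(uv)$ and the common color $\col_2(\bar s) = \col_2(\bar s')$; hence they agree, and $\phi(uv, \bar s)$ is false, i.e.\ $\dist_S(u,v) \le r$. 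Since $S \in \XX$ was arbitrary, this completes the proof.

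The only subtlety is bookkeeping with the metrics: one must verify that the $\YY$-distance controls the $\PP$-distance (which is immediate from the definition of $\PP$ as a common refinement) so that the locality lemma, stated in terms of partition-metric balls, can be invoked; and one must choose $r'$ large enough that, even after losing $r$ in the triangle inequality, the remaining $\YY$-distance from $uv$ to $\bar s$ still exceeds the Gaifman radius $r'$, which is what the assumption $v \notin \Ball^{2r'}_\YY(\bigcup \FF')$ (paired with $r < r'$) was designed to provide.
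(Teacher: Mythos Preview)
Your proof is correct and follows essentially the same approach as the paper's: the paper argues by contrapositive (taking $w \notin \Ball^r_\FF(v)$ and exhibiting some $S' \in \FF^*$ with $\dist_{S'}(v,w)>r$), whereas you argue the inclusion directly, but the key step---transferring the truth value of $\phi$ from a tuple in $\XX$ to its same-colour representative in $\overline\XX$ via \cref{lem:partition-gaifman}, after using $v \notin \Ball^{2r'}_\YY(\bigcup \FF')$ together with $u \in \Ball^r_\YY(v)$ and the triangle inequality to land in the far regime---is identical. One cosmetic slip: the equality $\Ball^r_\FF(v) = \Ball^r_{\FF^*}(v) \cap \bigcap_{S \in \XX} \Ball^r_S(v)$ is in general only the containment $\supseteq$ (since $\YY$ need not be contained in $\FF$), but that is precisely the direction you need.
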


\begin{claimproof}
    Let $v$ be as in the statement of the claim.
    Let $w \notin \Ball^r_{\FF}(v)$. 
    We want to show that $w \notin \Ball^r_{\FF^*}(v)$, too.
    By assumption, there exists a set $S \in \FF$ such that $\dist_S(v,w) > r$.
    If $S$ is also contained in $\FF^*$, then we are done.
    Otherwise, $S \in \XX \subseteq \FF'$.
    Let $\bar s$ be a $t$-tuple that enumerates $S$.
    We know that $G \models \phi(vw,\bar s)$.
    Since $S \in \XX \subseteq \FF'$, we know that $\dist_\YY(v, \bar s) > 2r'$ by assumption.
    Now if $\dist_\YY(v,w) > r$, then we are also done.
    Otherwise, we have $\dist_\YY(v,w) \leq r$ and by the triangle inequality $\dist_\YY(vw, \bar s) > r'$.
    By construction of~$\XX$ there is another $S' \in \FF^*$ that has an enumeration $\bar s'$ such that $\col_2(\bar s) = \col_2(\bar s') = K$.
    By the same argument as before, also $\dist_\YY(vw, \bar s') > r'$.
    As $\bar s$ and $\bar s'$ have the same color and are both far from $vw$, \cref{lem:partition-gaifman} gives us that also $G \models \phi(vw,\bar s')$ and $\dist_{\FF^*}(v,w) > r$, as desired. 
\end{claimproof}
It follows by \cref{clm:balls-close-to-S}, \cref{clm:balls-far-from-S}, and $\FF$ being a sparsifying family, that also $\FF^*$ is a sparsifying family.
It finally remains to analyze the size of $\FF^*$.
We have 
\[
    |\FF^*| \leq |\FF| - |\XX| + |\YY|,
\]
where $\FF$ has size at least $k$, $\YY$ has size at most $\lambda$, and $\XX$ has size at least $M^{-1}(k) - \ell$.
This means 
\[
    |\FF^*| \leq |\FF| - M^{-1}(k) + \ell + \lambda.
\]
Setting $k \coloneqq M(\ell + \lambda + 1)$ yields $|\FF^*| < |\FF|$ as desired.
\end{proof}

We can now wrap things up in the main result of this section.

\begin{lemma}\label{lem:nip-implies-sep}
   Every monadically dependent graph class is flip-separable.
\end{lemma}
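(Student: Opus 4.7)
The plan is to combine \cref{lem:seplargeweight}---which, under $\eps$-balanced weights, produces a bounded sparsifying family~$\FF$---with the metric-conversion result \cref{lem:new-transfer}, which replaces a set-parameterised flip-metric by the distance metric of a single definable flip. Two preliminary issues must be resolved. First, \cref{lem:seplargeweight} requires an $\eps$-balanced weight, whereas flip-separability permits an arbitrary weight function (with the ball-weight bound demanded only for \emph{light} vertices, i.e.\ those of $\weight$-weight at most $\eps \cdot \weight(V(G))$). Second, a ball in the target flip could still contain heavy vertices, whose weight would spoil the bound, so heavy vertices must be isolated.

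To address the first issue, I would set $H \coloneqq \{v \in V(G) : \weight(v) > \eps^2 \cdot \weight(V(G))\}$, which has fewer than $1/\eps^2$ elements, and let $\weight^*$ be $\weight$ on $V(G) \setminus H$ and $0$ on $H$. If $\weight^*(V(G)) \geq \eps \cdot \weight(V(G))$, then $\weight^*(v) \leq \eps^2 \cdot \weight(V(G)) \leq \eps \cdot \weight^*(V(G))$, so $\weight^*$ is $\eps$-balanced with respect to itself, and \cref{lem:seplargeweight} applied at radius $30r$ with parameter $\eps$ yields a family~$\FF$ of size bounded in terms of $\CC, r, \eps$ satisfying $\weight^*(\Ball_\FF^{30r}(v)) \leq \eps \cdot \weight(V(G))$ for every~$v$. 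In the opposite case, every ball has $\weight^*$-mass below $\eps \cdot \weight(V(G))$ automatically, and I simply set $\FF \coloneqq \emptyset$.

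Next I would turn $\FF$ into a single flip and simultaneously arrange for $H$ to be isolated. Set $T \coloneqq \bigcup \FF \cup H$, still of bounded size. By \cref{lem:new-transfer} applied to~$T$, there is a bounded set~$S'$ and an $S'$-definable flip~$G''$ of~$G$ with $\Ball_{G''}^r(v) \subseteq \Ball_T^{30r}(v)$ for every~$v$. Since $\bigcup \FF \subseteq T$ gives $\dist_T \geq \dist_S$ and hence $\Ball_T^{30r}(v) \subseteq \Ball_S^{30r}(v)$ for every $S \in \FF$, this also yields $\Ball_{G''}^r(v) \subseteq \Ball_\FF^{30r}(v)$. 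I now enlarge $S'$ to $\tilde S \coloneqq S' \cup H$; since $\PP_{\tilde S}$ refines $\PP_{S'}$, the flip~$G''$ is also a $\PP_{\tilde S}$-flip of~$G$. Each $h \in H$ forms a singleton part of $\PP_{\tilde S}$, and because $h \in \tilde S$, the adjacency of $h$ to every other $\PP_{\tilde S}$-part is uniform. I then define $G'''$ as the $\PP_{\tilde S}$-flip obtained from $G''$ by further flipping each singleton $\{h\}$ against the parts to which it is currently adjacent, so that every $h \in H$ becomes isolated in~$G'''$. As this modification only removes edges from~$G''$, we have $\Ball_{G'''}^r(v) \subseteq \Ball_{G''}^r(v)$ for every~$v$.

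The flip-separability conclusion for a light~$v$ is then immediate. If $v \in H$, then $\Ball_{G'''}^r(v) = \{v\}$ has weight $\weight(v) \leq \eps \cdot \weight(V(G))$ by lightness. Otherwise $\Ball_{G'''}^r(v) \cap H = \emptyset$ since $H$ is isolated in~$G'''$, so $\weight(\Ball_{G'''}^r(v)) = \weight^*(\Ball_{G'''}^r(v)) \leq \weight^*(\Ball_{G''}^r(v)) \leq \eps \cdot \weight(V(G))$, using the bounds assembled above. Since $|\tilde S|$ is bounded in terms of $\CC, r, \eps$, so is the number of parts of $\PP_{\tilde S}$, giving a $k$-flip~$G'''$ with $k$ depending only on~$\CC, r, \eps$. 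The main delicate point I anticipate is the bookkeeping between the several metrics involved---$\dist_\FF$, $\dist_T$, $\dist_{G''}$, and $\dist_{G'''}$---and verifying that enlarging the defining set of the definable flip from $S'$ to $S' \cup H$ and then isolating~$H$ can only shrink balls.
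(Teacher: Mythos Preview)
Your proposal is correct and follows essentially the same three-step strategy as the paper's proof: apply \cref{lem:seplargeweight} to an $\eps$-balanced modification of the weight function, convert the resulting family $\FF$ into a single bounded flip via metric conversion, and then isolate the heavy vertices. The only differences are cosmetic: the paper achieves $\eps$-balancedness by padding $G$ with a weighted independent set (working in the auxiliary class $\CC^+$) rather than by your zeroing-plus-case-split at threshold $\eps^2$, and it invokes \cref{lem:informal} on the partition $\PP_{\bigcup\FF}$ rather than \cref{lem:new-transfer} on the set $T=\bigcup\FF\cup H$; both routes give the same ball containment up to constants.
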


\begin{proof}
    Let $\C$ be monadically dependent and fix $r \in \N$ and $\varepsilon > 0$. Consider the graph class 
    \[ \CC^+:=\{G + I_n : G \in \CC, n \in \N\} \] 
    where $I_n$ denotes an independent set of size $n$, and~$+$ denotes disjoint union of graphs. Evidently, $\CC^+$ is still monadically dependent. 
    Let $t,k_0 \in \N$ be the constants obtained by applying \Cref{lem:seplargeweight} on $\CC^+$ with $r':=6r$ and $\eps':=\eps$. We fix $k_1:= (k_0t + 2^{k_0t})$, $k_2:=k_1\cdot 2^{k_1}$, and $k:=k_2\cdot(\floor{\frac{1}{\varepsilon}}+2^{\floor{\frac{1}{\varepsilon}}})$. Take $G \in \C$ and weights $\weight: V(G) \to \R_{\geq 0}$, and let $W:= \{v \in V(G): \weight(v)>\eps\cdot \weight(V(G))\}$ be the set of vertices of $\eps$-large weight. Evidently, $|W|\leq \floor{\frac{1}{\eps}}$. By assigning $\eps$-small weights to vertices in the independent set we obtain some $c \in \N$, a graph $G^+:=G + I_c \in \CC^+$, and a weight function $\weight^+:V(G^+) \to \R_{\geq 0}$ with the following properties:
    \begin{enumerate}
        \item $\weight^+(v)=\weight(v)$ for all $v \in V(G)\setminus W$;
        \item $\weight^+(v)=0$ for all $v \in W$;
        \item $\weight^+(V(G^+))=\weight(V(G))$;
        \item $\weight^+$ is $\eps$-balanced.
    \end{enumerate}
    It follows by \Cref{lem:seplargeweight} that there is a collection $\FF\subseteq 2^{V(G)}$ of at most $k_0$ many sets of size at most $t$ such that for every vertex $v \in V(G^+)$
    \[ \weight^+(\Ball_\FF^{6r}(v)) \leq \eps \cdot \weight^+(V(G^+)).\]
    Let $\SS = \bigcup \FF \subseteq V(G)$; clearly $|\SS|\leq k_0t$. Consider the partition $\PP$ of size $k_1$ into $\SS$-classes. By \Cref{lem:informal} we obtain a $k_2$-flip $G^+_0$ of $G^+$ such that for all $v\in V(G)$
    \[ \Ball^r_{G^+_0}(v) \subseteq \Ball^{6r}_\PP(v)=\Ball_\SS^{6r}(v)\subseteq \Ball_\FF^{6r}(v).\]
    Finally, consider the $k$-flip $G_1^+$ of $G^+$ obtained from $G^+_0$ by isolating the vertices in $V(G)\setminus W$, and let $G':=G^+_1[V(G)]$; this is a $k$-flip of $G$. It follows that for every $v \in V(G)$ of $\eps$-small weight,
    \begin{gather*}
    \weight(\Ball^r_{G'}(v))= \weight(\Ball^r_{G^+_1}(v)) = \weight^+(\Ball^r_{G^+_0}(v)) \leq \weight^+(\Ball_\FF^{6r}(v))\\ \leq \eps \cdot \weight^+(V(G^+)) = \eps \cdot \weight(V(G)).
    \end{gather*}
    Hence, $\C$ is flip-separable as claimed. 
\end{proof}

\subsection{Flip-separability implies monadic dependence}

We finally show that flip-separability implies flip-breakability, which is known to be equivalent to monadic dependence~\cite{flip-breakability}.

\begin{lemma}\label{lem:sep-implies-break}
   Every flip-separable graph class is flip-breakable, and hence monadically dependent. 
\end{lemma}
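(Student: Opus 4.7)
The plan is to apply flip-separability with a fixed choice of $\eps$ to produce a bounded-size flip $G'$ whose $2r$-balls are uniformly small, and then extract the two separated sets required by flip-breakability via a standard bipartite density argument. The ``and hence monadically dependent'' part will follow from the characterization flip-breakable $\Leftrightarrow$ monadically dependent of \cite{flip-breakability}.

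Concretely, given $r$, I would fix $\eps = 1/3$ and take $t$ to be the flip-size provided by flip-separability at radius $2r$ and this $\eps$; crucially, $t$ depends only on $\CC$ and $r$, not on $m$. For $m \in \N$ and $W \subseteq V(G)$ to be broken up, I put the indicator weight $\weight = \mathbf{1}_W$, so that every vertex carries weight at most $|W|/3 = \eps \cdot \weight(V(G))$ as soon as $|W| \ge 3$. Flip-separability then yields a $t$-flip $G'$ with $|\Ball^{2r}_{G'}(v) \cap W| \le |W|/3$ for every $v$. The auxiliary graph $H$ on $W$ whose edges are the pairs $\{u,v\}$ with $\dist_{G'}(u,v) \le 2r$ therefore has maximum degree below $|W|/3$, so its complement has edge density at least $2/3$.

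The main step is to find two disjoint sets $A_1, A_2 \subseteq W$ of size $m$ with $\dist_{G'}(a_1,a_2) > 2r$ for every $(a_1,a_2) \in A_1 \times A_2$, i.e., a $K_{m,m}$ in the complement of $H$; this will then imply $\Ball^r_{G'}(A_1) \cap \Ball^r_{G'}(A_2) = \emptyset$ by the triangle inequality, verifying flip-breakability with the $t$-flip $G'$. Since the complement of $H$ has constant positive edge density, the K\H{o}v\'ari--S\'os--Tur\'an theorem furnishes such a $K_{m,m}$ once $|W|$ exceeds a threshold $M_r(m)$ depending only on $m$. The only conceptual care needed---and the reason for fixing $\eps = 1/3$ at the outset---is that $t$ in flip-breakability must be independent of $m$, so the $m$-dependence has to be absorbed entirely by $M_r(m)$ through the bipartite Ramsey step rather than by the choice of $\eps$.
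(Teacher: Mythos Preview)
Your proof is correct, and the setup---indicator weights on $W$, a fixed $\eps$ independent of $m$, and recovery of the separated pair inside the resulting flip---matches the paper exactly. The difference lies in the extraction step. You pass through the K\H{o}v\'ari--S\'os--Tur\'an theorem: the auxiliary ``proximity'' graph on $W$ has maximum degree at most $|W|/3$, so its complement is dense and contains a $K_{m,m}$ once $|W|$ exceeds a threshold roughly exponential in $m$. The paper instead applies flip-separability at radius $4r$ (rather than $2r$) with $\eps = 1/2$, and then argues by a direct dichotomy: either some $2r$-ball meets $W$ in at least $\sqrt{|W|}$ points, in which case that ball and the complement of the surrounding $4r$-ball give the two sets; or all $2r$-balls are sparse, and a greedy $2r$-packing in $W$ already has $\sqrt{|W|}$ points to split in half. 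Either way $M_r(m) = 4m^2$ suffices. So the paper's argument is more elementary (no Tur\'an-type tool) and yields a polynomial $M_r(m)$ independent of $\CC$ and $r$---a point the paper explicitly advertises as a quantitative strengthening of the original flip-breakability bounds from \cite{flip-breakability}. Your route is perfectly valid for the qualitative statement but loses this sharpening.
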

\begin{proof}
  For every integer $r \geq 0$ and for $\eps = 1/2$, let $k$ satisfy the definition of flip-separability of $\mathcal C$ for the parameters \emph{$4r$} and $\eps$.
  We show that we can take $t = t(r,\mathcal C) := k$ and $M_r(m) = 4m^2$, in~\cref{def:flip-breakable}, to witness that $\mathcal C$ is flip-breakable. 

  Fix $G \in \mathcal C$, $m \in \NN$, and $W \subseteq V(G)$ of size at~least~$M_r(m) = 4m^2$.
  We can assume without loss of generality that $W$ has size exactly $4m^2$.
  Define the weights $\weight \colon V(G) \to \R_{\geq 0}$ by $\weight(v)=1$ for every $v \in W$, and $\weight(v)=0$ otherwise.
  The flip-separability of $\mathcal C$ with radius $4r$, $\eps=1/2$, and this weight function yields a~$k$-flip $H$ of $G$ such that
  \[\weight(\Ball_H^{4r}(v)) = \big|\Ball_H^{4r}(v) \cap W\big| \leq \eps \cdot \weight(V(G))=|W|/2\]
  for every $v \in V(G)$ of weight at~most~$|W|/2$.
  We can of course assume $m > 0$ (there is nothing to prove otherwise), thus $|W| \geqslant 4$, and \emph{every} vertex of~$G$ has weight at~most~$|W|/2$.
  We distinguish two covering cases.
  
  \subparagraph*{Case 1:} There is a~vertex $v \in V(G)$ such that $|\Ball_H^{2r}(v) \cap W| \geq \sqrt{|W|}$.\\
  On the one hand, $A_1 \coloneqq \Ball^{2r}_H(v) \cap W$ has size at least $\sqrt{4m^2} = 2m > m$.
  From the flip-separability, we also know that $A' \coloneqq \Ball^{4r}_H(v) \cap W$ has size at most $|W|/2$.
  Therefore, on the other hand, $A_2 \coloneqq W \setminus A'$ has size at least $|W|/2 = 2m^2 \geq m$.
  In $H$, no vertex of $A_1$ is at distance at~most~$2r$ of a~vertex $w$ of~$A_2$, as that would put $w$ in $\Ball^{4r}_H(v)$. 
  Thus, $A_1, A_2$ satisfy the conclusion of~\cref{def:flip-breakable}.
  
  \subparagraph*{Case 2:} For every $v \in V(G)$, it holds that $|\Ball_H^{2r}(v) \cap W| < \sqrt{|W|}$.\\
  Let $\{v_1, v_2, \ldots, v_h\}$ be a~maximal (greedily-constructed) subset of $W$ such that for every $i \neq j \in [h]$, $v_i$ and $v_j$ are at distance larger than~$2r$ from each other in~$H$.
  By assumption, we have that $h \geq |W|/\sqrt{|W|} = \sqrt{|W|} = 2m$.
  Therefore, $A_1 \coloneqq \{v_1, v_2, \ldots, v_m\}$ and $A_2 \coloneqq \{v_{m+1}, v_{m+2}, \ldots, v_{2m}\}$ satisfy the conclusion of~\cref{def:flip-breakable}.
\end{proof}

\Cref{lem:nip-implies-sep,lem:sep-implies-break} jointly establish~\cref{thm:equivalence}. We remark that the flip-breakability margins $M_r(m) = 4m^2$ obtained in \cref{lem:sep-implies-break} are polynomial in $m$ and depend on neither $\CC$ nor $r$. This is a strengthening of the original bounds obtained in \cite{flip-breakability}, where $M_r$ was established as a fast-growing function with dependence on both $\CC$ and $r$.
This means our proof of \cref{thm:equivalence} can also be seen as a boosting argument for flip-breakability.



\bibliography{ref}

\appendix

\end{document}